\documentclass[11pt]{article}
\usepackage{enumerate}
\usepackage{amssymb,a4wide,latexsym,makeidx,epsfig,fleqn}
\usepackage{amsthm}
\usepackage{amsmath}
\usepackage{enumerate}
\usepackage{graphicx}
\usepackage{float}
\usepackage{tikz}
\usepackage[colorlinks=true, linkcolor=blue, citecolor=red, urlcolor=blue]{hyperref}
\allowdisplaybreaks[4]
\newtheorem{theorem}{Theorem}[section]

\newtheorem{lemma}[theorem]{Lemma}

\newtheorem{problem}{Problem}

\makeatletter
\AtBeginDocument{
	\renewenvironment{proof}[1][\proofname]
	{
		\par
		\pushQED{\qed}
		\normalfont
		\topsep6pt\relax
		\trivlist
		\item[\hskip\labelsep\normalfont\bfseries
		#1\@addpunct{.}]
		\ignorespaces
	}
	{
		\popQED
		\endtrivlist
		\@endpefalse
	}
}
\makeatother
\begin{document}
\textwidth 150mm \textheight 225mm
\title{Some results on the distance spectral radius and edge-disjoint spanning trees of graphs\thanks{Supported 
by the National Natural Science Foundation of China (No. 12271439).}}
\author{{Yongbin Gao$^{a,b}$, Ligong Wang$^{a,b,}$\footnote{Corresponding author.}}\\
{\small $^a$ School of Mathematics and Statistics, Northwestern
Polytechnical University,}\\ {\small  Xi'an, Shaanxi 710129,
P.R. China.}\\
{\small $^b$ Xi'an-Budapest Joint Research Center for Combinatorics, Northwestern
Polytechnical University,}\\
{\small Xi'an, Shaanxi 710129,
P.R. China. }\\
{\small E-mail: gybmath@163.com, lgwangmath@163.com} }
\date{}
\maketitle
\begin{center}
\begin{minipage}{120mm}
\vskip 0.3cm
\begin{center}
{\small {\bf Abstract}}
\end{center}
{\small Let $\tau(G)$ denote the maximum number of edge-disjoint spanning trees in a connected graph $G$ of order $n$, and let $\rho_D(G)$ denote its distance spectral radius. For an integer $k\ge2$, Fan, He and Zhao [Discrete Appl. Math. 376 (2025) 31--40] obtained a sharp distance spectral radius condition for $\tau(G)\ge k$ when $n\ge2k+6$. In this paper, we fill the gap $2k\le n\le2k+5$ and thus complete the result for all $n\ge2k$. The extremal graph given by Fan, He and Zhao remains valid for $n\ge2k+2$, while we determine the unique extremal graph for each of the orders $n=2k$ and $n=2k+1$. We further obtain sharp distance spectral radius conditions and characterize all extremal graphs under the minimum degree condition $\delta(G)\ge k$ for all $n\ge2k$. Finally, for graphs with the stronger minimum degree condition $\delta(G)\ge6k-4$ and order $n\ge2\delta(G)+2$, we obtain a sharp distance spectral radius condition ensuring $\tau(G)\ge k$ and determine the unique extremal graph.

\vskip 0.1in \noindent {\bf Keywords}: \ Edge-disjoint spanning trees, Distance spectral radius, Minimum degree, Equitable quotient matrix
\vskip
0.1in \noindent {\bf AMS Subject Classification (2020)}: \ 05C50,  05C05, 05C70}

\end{minipage}
\end{center}

\section{Introduction}

In this paper, we consider finite, undirected and simple graphs. Let $G$ be a graph with vertex set $V(G) = \{v_1, v_2,\ldots, v_n\}$ and edge set $E(G)$. We denote the order and size of $G$ by $n=|V(G)|$ and $e(G)=|E(G)|$, respectively. For a vertex $v_i\in V(G)$, let $N_G(v_i)$ and $d_G(v_i)=|N_G(v_i)|$ denote its neighborhood and degree, and let $\delta(G)$ (or simply $\delta$) be the minimum degree of $G$. A graph $G$ is called $d$-regular if every vertex of $G$ has degree $d$. For $X\subseteq V(G)$, let $G[X]$ denote the subgraph of $G$ induced by $X$. For any two disjoint subsets $X, Y \subseteq V(G)$, let $E_G(X, Y)$ be the set of edges connecting a vertex in $X$ to a vertex in $Y$, and denote $e_G(X, Y) = |E_G(X, Y)|$. We sometimes omit the subscript $G$ when there is no ambiguity. For any two vertices $v_i$ and $v_j$ of a connected graph $G$, the distance $d_G(v_i, v_j)$ between  $v_i$ and $v_j$ is the length of a shortest path connecting them. The distance matrix of $G$, denoted by $D(G)$, is the $n \times n$ matrix whose $(v_i, v_j)$-entry is $d_G(v_i, v_j)$(or $d_{ij}$). The largest eigenvalue of $D(G)$, denoted by $\rho_D(G)$, is called the distance spectral radius of $G$. 

The spanning tree packing number of a graph $G$, denoted by $\tau(G)$, is defined as the maximum number of edge-disjoint spanning trees contained in $G$. Seymour proposed the following problem in a private communication with Cioab\v{a} (see \cite{Cioaba2012}).

\begin{problem}\label{prob1}
	Let $G$ be a connected graph. Determine the relationship between $\tau(G)$ and the eigenvalues of $G$.
\end{problem}

Let $A(G)$ be the adjacency matrix of a graph $G$ of order $n$, and let $\lambda_1(G)\ge\lambda_2(G)\ge\cdots\ge\lambda_n(G)$ be its eigenvalues. The largest eigenvalue $\lambda_1(G)$ is called the spectral radius of $G$, denoted by $\rho(G)$. Motivated by Problem \ref{prob1}, Cioab\u{a} and Wong \cite{Cioaba2012} initiated the study of the relationship between $\tau(G)$ and adjacency eigenvalues. They obtained sufficient conditions involving $\lambda_2(G)$ for $d$-regular graphs to satisfy $\tau(G)\ge k$ for $k=2,3$, and proposed a conjecture for $4\le k\le\lfloor d/2\rfloor$. Gu et al. \cite{Gu2016} extended this conjecture to graphs with minimum degree $\delta$, and Liu et al. \cite{Liu2014a} subsequently proved it. More precisely, for an integer $k\ge2$, they showed that if $G$ has minimum degree $\delta\ge2k$ and $\lambda_2(G)<\delta-\frac{2k-1}{\delta+1}$, then $\tau(G)\ge k$.
For the spectral radius, Fan et al. \cite{Fan2023} proved that if $k\ge2$ and $G$ is a connected graph of order $n\ge2\delta+3$ with minimum degree $\delta\ge2k$, then $\rho(G)\ge\rho(B_{n,\delta+1}^{k-1})$ implies $\tau(G)\ge k$, unless $G\cong B_{n,\delta+1}^{k-1}$, where $B_{n,\delta+1}^{k-1}$ is obtained from $K_{\delta+1}\cup K_{n-\delta-1}$ by adding $k-1$ edges joining one vertex in $K_{\delta+1}$ to $k-1$ distinct vertices in $K_{n-\delta-1}$.
More results on the relationship between eigenvalues of graphs and the spanning tree packing number can be found in \cite{Cai2026,Chang2026,Cioaba2022,Duan2020,Fan2025,Gao2026a,Gao2026b,Hong2016,Hu2023,Li2013,Liu2014b,Liu2019,Zhang2026}.

Recently, Fan et al. \cite{Fan2025} obtained sharp distance spectral radius conditions guaranteeing $\tau(G)\ge k$ for general connected graphs and connected balanced bipartite graphs, respectively. We denote by $G_1 \vee G_2$ the join of two disjoint graphs $G_1$ and $G_2$, which is obtained from the union $G_1 \cup G_2$ by adding all edges between $V(G_1)$ and $V(G_2)$. For a graph $G$ and a positive integer $c$, let $cG$ denote the disjoint union of $c$ copies of $G$. For a graph $H$ and a subset of edges $E' \subseteq E(H)$, let $H\setminus E'$ denote the graph obtained from $H$ by deleting the edges in $E'$. Let $K_n$, $P_n$, and $C_n$ denote the complete graph, path, and cycle of order $n$, respectively. The complement of a graph $G$ of order $n$, denoted by $\overline{G}$, is defined by $\overline{G}=K_n\setminus G$. For general connected graphs, Fan et al. \cite{Fan2025} obtained the following result.

\begin{theorem}[\cite{Fan2025}]
	\label{thm:fan}
	Let $k\ge2$ be an integer and let $G$ be a connected graph of order $n\ge2k+6$. If $\rho_D(G)\le\rho_D(K_{k-1}\vee(K_{n-k}\cup K_1))$, then $\tau(G)\ge k$, unless $G\cong K_{k-1}\vee(K_{n-k}\cup K_1)$.
\end{theorem}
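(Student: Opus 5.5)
The plan is to argue by contradiction using the Nash-Williams--Tutte criterion: $\tau(G)\ge k$ if and only if for every partition $\mathcal{P}$ of $V(G)$ into $p\ge 2$ parts, the number of crossing edges is at least $k(p-1)$. Suppose $\tau(G)\le k-1$. Then there is a partition $V_1,\dots,V_p$ with at most $k(p-1)-1$ crossing edges. Adding edges never increases the distance spectral radius, and strictly decreases it for connected graphs since $D$ is nonnegative and irreducible. So I may assume $G$ is edge-maximal subject to this partition property. Then each $G[V_i]$ is complete and exactly $k(p-1)-1$ crossing edges are present. The task reduces to comparing $\rho_D$ over such graphs and showing the minimum distance spectral radius is attained uniquely at $K_{k-1}\vee(K_{n-k}\cup K_1)$. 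That graph is exactly the case $p=2$ with parts of sizes $1$ and $n-1$, with the singleton joined to $k-1$ vertices.

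For fixed $p$ and part sizes $n_1\le\cdots\le n_p$, I would first use Perron vector arguments, moving crossing edges so as to shorten distances. This brings the structure to a canonical form in which all crossing edges are concentrated around one large part $V_p$ of size $n-p+1$, and the remaining parts are singletons. Concretely, I would show two things. First, merging small parts into the largest clique, or moving a vertex from a small part into the large part, lowers $\rho_D$; this uses the Rayleigh quotient $x^TD(G)x$ with the Perron vector $x$ of the original graph, since the moved vertex's distances to the big clique drop to $1$. Second, redistributing the $k(p-1)-1$ crossing edges as evenly as possible reduces distances. Next I would compare the candidate graphs with $p\ge 3$ against $p=2$. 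For $p=2$ with $|V_1|=s\ge2$, the graph $K_s$ and $K_{n-s}$ joined by $k-1$ edges has many pairs at distance $2$ or $3$. I would bound its $\rho_D$ from below by $\mathbf{1}^TD\mathbf{1}/n$, i.e.\ the Wiener-index bound $2W(G)/n$. For the extremal graph I would compute $\rho_D$ exactly via the $3\times 3$ equitable quotient matrix on the parts $K_1$, $K_{k-1}$ and $K_{n-k}$ minus the neighbours, and give an upper bound on it.

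The main obstacle is the comparison step: the Wiener lower bound for competitors against an upper estimate for $\rho_D(K_{k-1}\vee(K_{n-k}\cup K_1))$. The extremal graph has only $n-k$ pairs at distance $2$, so its $\rho_D$ is roughly $n-1+2(n-k)/n$. Competitors with $p\ge3$ singletons, or $s\ge2$, have at least about $2(n-k)$ or $s(n-s)-(k-1)$ pairs at distance at least $2$. This is where $n\ge 2k+6$ enters: it guarantees that $s(n-s)-k+1$ exceeds $n-k$ by enough margin for $2\le s\le n/2$. The $p\ge3$ singleton case also needs a separate count, because the $k(p-1)-1$ crossing edges may cover a singleton's neighbourhood. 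I would handle this by showing some singleton has degree at most $k-1$, giving at least $n-k$ distance-$2$ pairs from it. A second singleton then contributes further pairs, and quotient-matrix estimates make the inequality strict. Finally, equality forces $p=2$, $s=1$ and the specific graph.
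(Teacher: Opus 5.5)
Your core argument is sound, and for $p=2$ it is a genuinely different and more elementary route than the paper's. Passing to an edge-maximal supergraph with the same deficient partition (Lemma \ref{lem:edge}) turns the case $p=2$, $|V_1|=1$ into exactly $K_{k-1}\vee(K_{n-k}\cup K_1)$, with strict inequality for proper subgraphs. For $2\le |V_1|\le n-2$ you have $e(G)\le 1+\binom{n-2}{2}+k-1$. Hence $\rho_D(G)\ge 2W(G)/n\ge 2(n-1)-2e(G)/n\ge n+3-\frac{2k+6}{n}$, and this exceeds $n+3-\frac{4(k+1)}{n}$ for $k\ge2$. That beats the quotient-matrix upper bound of Lemma \ref{lem:fan-bound}. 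The paper instead handles $p=2$ by citing the Li--Fan--Wang edge-connectivity theorem (Lemma \ref{lem:connectivity}) together with monotonicity in $s$. Your count avoids that external result. Neither route actually needs $n\ge 2k+6$: both work for $n\ge 2k+2$, which is what the paper proves.

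For $p\ge3$, drop the Perron-vector ``canonical form'' reduction, because as described it does not work.
\begin{itemize}
\item Moving a vertex out of a part of size at least two turns its former intra-part edges into crossing edges. Either the bound $k(p-1)-1$ breaks, or you must delete those edges, which lengthens distances.
\item Merging two parts keeps a deficient partition only if $e_G(V_i,V_j)\ge k$, which need not hold.
\item ``Even redistribution'' of crossing edges plays no role.
\end{itemize}
Your claim that some singleton has degree at most $k-1$ is also false. Take $p=3$, $k=2$, singletons $u,v$ and crossing edges $uv,ua,vb$: both singletons have degree $2=k$.

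None of this is needed. Exactly as in the paper's Case 1, Lemma \ref{lem:sum} gives $\sum_i\binom{n_i}{2}\le\binom{n-p+1}{2}$ for arbitrary part sizes. So $e(G)\le\binom{n-p+1}{2}+k(p-1)-1$, and $2W(G)/n\ge n+3-\frac{4(k+1)}{n}$ for every $p\ge3$ at once.

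This lower bound coincides with the upper bound at $p=3$. Your proxy $n-1+2(n-k)/n$ is the Rayleigh value of the all-ones vector for the extremal graph, and it falls short of the true $\rho_D$ by up to nearly $1$ for large $n$. So you cannot rely on a heuristic margin. You genuinely need the explicit strict estimate $\rho_D(K_{k-1}\vee(K_{n-k}\cup K_1))<n+3-\frac{4(k+1)}{n}$, obtained from the cubic $\varphi_{Q_2}$.
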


A simple graph of order $n\ge2$ containing $k$ edge-disjoint spanning trees must satisfy $k(n-1)\le n(n-1)/2$, and hence $n\ge2k$ is necessary. We show that the sharp distance spectral radius condition in Theorem \ref{thm:fan} remains valid for all $n\ge2k+2$. We also study the two remaining cases $n=2k$ and $n=2k+1$, for which we establish sharp distance spectral radius conditions and characterize the unique extremal graphs. Thus, we obtain a complete characterization of the sharp distance spectral radius conditions for all possible orders $n\ge2k$, as summarized in the following theorem.

\begin{theorem}
	\label{thm:all1}
	Let $k\ge2$ be an integer and let $G$ be a connected graph of order $n\ge2k$. Then the following statements hold.
	
	\begin{enumerate}[(i)]
		\item If $n=2k$ and $\rho_D(G)\le\rho_D(K_n-e)$, where $e\in E(K_n)$, then $\tau(G)\ge k$, unless $G\cong K_n-e$.
		
		\item If $n=2k+1$ and $\rho_D(G)\le\rho_D\bigl(\overline{P_3\cup(k-1)P_2}\bigr)$, then $\tau(G)\ge k$, unless $G\cong\overline{P_3\cup(k-1)P_2}$.
		
		\item If $n\ge2k+2$ and $\rho_D(G)\le\rho_D\bigl(K_{k-1}\vee(K_{n-k}\cup K_1)\bigr)$, then $\tau(G)\ge k$, unless $G\cong K_{k-1}\vee(K_{n-k}\cup K_1)$.
	\end{enumerate}
\end{theorem}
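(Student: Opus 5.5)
Throughout, argue by contradiction: assume $\tau(G)\le k-1$. By the Nash-Williams--Tutte theorem there is a partition $\mathcal P=\{V_1,\dots,V_p\}$ of $V(G)$ with $p\ge2$ and
\[
e_G(\mathcal P)\le k(p-1)-1 .
\]
Adding edges inside a part, or between parts while keeping $e(\mathcal P)\le k(p-1)-1$, can only decrease distances. By the Perron--Frobenius theorem for the positive matrix $D(G)$, each such addition strictly decreases $\rho_D$. Hence we may assume every $V_i$ is a clique and $e(\mathcal P)=k(p-1)-1$. The task is then to show that, among all such saturated graphs, $\rho_D$ is minimized exactly by the claimed extremal graph. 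That graph satisfies the hypothesis with equality, which yields both the bound and uniqueness.

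For (iii), I would first reduce to $p=2$. For $p\ge3$, merging two parts, or comparing via the Wiener-type bound $\rho_D\ge \frac{2W(G)}{n}=\frac1n\mathbf 1^TD\mathbf 1$, should show that $\rho_D$ is much larger than the target. Since $D_{ij}\ge 2$ for nonadjacent pairs, $\mathbf 1^TD\mathbf 1\ge n(n-1)+2\,|\overline{E(G)}|$. Many parts with few crossing edges give many non-edges, roughly $\sum_{i<j}|V_i||V_j|-k(p-1)$. This must be compared with an upper bound on $\rho_D$ of the extremal graph, obtained from its $3\times3$ equitable quotient matrix (parts $K_{k-1}$, $K_{n-k}$, $K_1$).

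For $p=2$ with $|V_1|=a\le |V_2|=n-a$ and $k-1$ crossing edges, I would show the optimum has $a=1$ with the crossing edges forming a star from the singleton. The extremal graph is exactly this configuration, and spreading the crossing edges creates more distance-3 pairs. For $a\ge2$, the number of distance-$\ge2$ pairs is at least $a(n-a)-(k-1)$, which for $n\ge 2k+2$ exceeds $n-k$, the count for the extremal graph. The Rayleigh-quotient bound with $\mathbf 1$ versus the quotient-matrix value of the extremal graph should then settle this. The small range $2k+2\le n\le 2k+5$ is where Fan--He--Zhao's inequality failed, so I expect these estimates to need sharpening there. My first attempt would be a better test vector: $\mathbf 1$ weighted on the small part, or the Perron vector of the extremal graph's quotient matrix. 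I expect this step to be the main obstacle.

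For (i) and (ii) the graphs are nearly complete, and $\rho_D(G)=n-1+\theta$ where $\theta$ grows with the non-edges. For $n=2k$, $\tau(G)\ge k$ forces $G=K_n$, so every graph with $\tau\le k-1$ misses an edge and contains $K_n-e$ as a spanning supergraph. Therefore $\rho_D(G)\ge\rho_D(K_n-e)$, with equality only if $G\cong K_n-e$. For $n=2k+1$, note that $e(K_n)=k(n-1)+k$, and that $\tau(K_n-F)\ge k$ for $F$ a matching or small. I would determine the edge-maximal graphs with $\tau\le k-1$ via the partition condition. With $p=2$ and $a=1$, this gives $\overline{K_{1,k+1}}$-type complements, i.e.\ a singleton of degree $k-1$, which misses $k+1$ edges. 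Another family has $p=n$ and $e=k(n-1)-1$, missing $k+1$ edges spread out, e.g.\ $\overline{P_3\cup(k-1)P_2}$. I would compare $\rho_D$ for these candidates using the fact that $\rho_D$ of a diameter-2 graph equals $\rho(J-I+A(\overline G))$, i.e.\ $n-1$ plus a perturbation governed by $\overline G$. Minimizing over complements with $k+1$ edges leads to the most spread-out complement, $P_3\cup(k-1)P_2$, since a perfect matching complement is impossible ($n$ odd) and would keep $\tau=k$. Strict comparison of Perron roots then finishes uniqueness.
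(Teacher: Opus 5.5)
\textbf{Parts (iii) and (i).} Your plan for (iii) is sound, and it needs none of the sharpening you expect. The quotient matrix of the extremal graph gives $\rho_D\bigl(K_{k-1}\vee(K_{n-k}\cup K_1)\bigr)<n+3-\frac{4(k+1)}{n}$ for all $n\ge2k+2$. Against this bound, the all-ones vector already suffices.
\begin{itemize}
\item For $p\ge3$, the estimate $e(G)\le\binom{n-p+1}{2}+k(p-1)-1$ gives $\frac{2W(G)}{n}\ge n+3-\frac{4(k+1)}{n}$.
\item For $p=2$ with both parts of size at least $2$, it gives $\frac{2W(G)}{n}\ge n+3-\frac{2(k+3)}{n}$, which is no smaller.
\item For $p=2$ with a singleton part, $G$ is a spanning subgraph of the extremal graph.
\end{itemize}
This is more elementary than the paper's $p=2$ case, which cites the Li--Fan--Wang extremal theorem for edge connectivity.

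In (i) your implication runs the wrong way. ``$\tau(G)\ge k$ forces $G=K_n$'' does not show that a graph with $\tau(G)\le k-1$ misses an edge. What you need is $\tau(K_{2k})\ge k$. This follows from the partition count in the Nash-Williams--Tutte condition, as the paper does, or from a decomposition of $K_{2k}$ into $k$ Hamiltonian paths.

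\textbf{The gap: part (ii).} You correctly see two things. First, $\tau(G)\le k-1$ forces at least $k+1$ non-edges; for $n=2k+1$ this is in fact equivalent to $e(G)\le2k^2-1$. Second, edge monotonicity reduces everything to minimizing $\rho_D$ over graphs with exactly $k+1$ non-edges.

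But that minimization is the whole content of (ii), and ``the most spread-out complement wins by strict comparison of Perron roots'' is not an argument. The competitors are all complements of $(k+1)$-edge graphs on $2k+1$ vertices, not two candidate families. Every one of them has diameter $2$ and the same Wiener index $\binom n2+k+1$. So the all-ones Rayleigh quotient gives the same value $n+\frac1n$ for all of them, and the extremal graph exceeds this common value by less than $\frac1{n^2}$.

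You therefore need a lower bound that sees the degree sequence of $\overline G$, paired with an upper bound for $\overline{P_3\cup(k-1)P_2}$ of matching precision. The paper does this in three steps.
\begin{itemize}
\item Lower bound: the Zhou--Trinajsti\'c inequality $\rho_D(G)^2\ge\frac1n\sum_i\bigl(n-1+d_{\overline G}(v_i)\bigr)^2$.
\item Integer fact: a nonnegative integer sequence summing to $n+1$, other than a permutation of $(2,1,\ldots,1)$, has square sum at least $n+5$. Combined with the first step, this gives $\rho_D(G)^2\ge n^2+2+\frac3n$.
\item Upper bound: $\rho_D\bigl(\overline{P_3\cup(k-1)P_2}\bigr)<n+\frac1n+\frac1{n^2}$, from its $3\times3$ equitable quotient matrix.
\end{itemize}
These force the complement degree sequence to be $(2,1,\ldots,1)$, and hence $\overline G\cong P_3\cup(k-1)P_2$. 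Without an estimate of this second-order precision, your plan for (ii) does not close.
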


We next consider the minimum degree condition $\delta(G)\ge k$, which is necessary for $\tau(G)\ge k$. The extremal graph $K_{k-1}\vee(K_{n-k}\cup K_1)$ in Theorem \ref{thm:fan} has minimum degree $k-1$, so we seek a stronger distance spectral radius condition under this restriction. The cases $n=2k$ and $n=2k+1$ are already covered by Theorem \ref{thm:all1} $(i)$ and $(ii)$, since their extremal graphs have minimum degree $2k-2\ge k$. We therefore consider $n\ge2k+2$.

\begin{figure}[htbp]
	\centering
	
	\tikzset{
		vertex/.style={
			circle,
			fill=black,
			inner sep=0pt,
			minimum size=5.8pt
		},
		clique/.style={
			draw=black!80,
			fill=black!2,
			line width=0.85pt
		},
		subset/.style={
			draw=blue!65!black,
			fill=white,
			dash pattern=on 3pt off 2pt,
			line width=0.7pt
		},
		graph edge/.style={
			draw=black,
			line width=0.85pt,
			line cap=round
		}
	}
	
	\begin{minipage}[t]{0.48\textwidth}
		\centering
		\vspace{0pt}
		
		\resizebox{0.98\linewidth}{!}{
			\begin{tikzpicture}[font=\small]
				\path[use as bounding box]
				(-3.12,0.55) rectangle (3.12,5.35);
				
				\coordinate (w) at (0,4.80);
				\coordinate (v1) at (-1.20,2.75);
				\coordinate (v2) at (-0.40,2.75);
				\coordinate (vk) at (1.20,2.75);
				
				\draw[clique] (0,2.30)
				ellipse (2.90cm and 1.52cm);
				
				\draw[subset] (0,2.60)
				ellipse (1.90cm and 0.65cm);
				
				\foreach \vertexname in {v1,v2,vk}
				\draw[graph edge] (w) -- (\vertexname);
				
				\foreach \vertexname in {w,v1,v2,vk}
				\node[vertex] at (\vertexname) {};
				
				\node[above=5pt] at (w) {$\omega$};
				
				\node at (-1.20,2.38) {$v_1$};
				\node at (-0.40,2.38) {$v_2$};
				\node at (1.20,2.38) {$v_{k-1}$};
				
				\node[inner sep=0pt] at (0.40,2.75) {$\cdots$};
				
				\node[font=\normalsize] at (0,1.22) {$K_{n-1}$};
			\end{tikzpicture}
		}
		
		\par\vspace{0.2em}
		\caption{The graph $K_{k-1}\vee(K_{n-k}\cup K_1)$}
		\label{fig:join-graph}
	\end{minipage}
	\hfill
	\begin{minipage}[t]{0.48\textwidth}
		\centering
		\vspace{0pt}
		
		\resizebox{0.98\linewidth}{!}{
			\begin{tikzpicture}[font=\small]
				\path[use as bounding box]
				(-3.12,0.55) rectangle (3.12,5.35);
				
				\coordinate (u) at (-1.44,4.80);
				\coordinate (v) at (1.44,4.80);
				
				\coordinate (s1) at (-2.22,2.75);
				\coordinate (s2) at (-1.70,2.75);
				\coordinate (sk) at (-0.66,2.75);
				
				\coordinate (t1) at (0.66,2.75);
				\coordinate (t2) at (1.18,2.75);
				\coordinate (tk) at (2.22,2.75);
				
				\draw[clique] (0,2.30)
				ellipse (2.90cm and 1.52cm);
				
				\draw[subset] (-1.44,2.60)
				ellipse (1.20cm and 0.65cm);
				\draw[subset] (1.44,2.60)
				ellipse (1.20cm and 0.65cm);
				
				\draw[graph edge] (u) -- (v);
				
				\foreach \vertexname in {s1,s2,sk}
				\draw[graph edge] (u) -- (\vertexname);
				
				\foreach \vertexname in {t1,t2,tk}
				\draw[graph edge] (v) -- (\vertexname);
				
				\foreach \vertexname in {u,v,s1,s2,sk,t1,t2,tk}
				\node[vertex] at (\vertexname) {};
				
				\node[above=5pt] at (u) {$u$};
				\node[above=5pt] at (v) {$v$};
				
				\node at (-2.22,2.38) {$s_1$};
				\node at (-1.70,2.38) {$s_2$};
				\node at (-0.66,2.38) {$s_{k-1}$};
				
				\node at (0.66,2.38) {$t_1$};
				\node at (1.18,2.38) {$t_2$};
				\node at (2.22,2.38) {$t_{k-1}$};
				
				\node[inner sep=0pt] at (-1.18,2.75) {$\cdots$};
				\node[inner sep=0pt] at (1.70,2.75) {$\cdots$};
				
				\node at (-1.44,1.72) {$S$};
				\node at (1.44,1.72) {$T$};
				
				\node[font=\normalsize] at (0,1.22) {$K_{n-2}$};
			\end{tikzpicture}
		}
		
		\par\vspace{0.2em}
		\caption{The graph $F_{n,k}$}
		\label{fig:Fnk}
	\end{minipage}
	
\end{figure}

For $n=2k+2\ge6$, let $\mathcal{F}_n$ be the family of graphs obtained from $K_n$ by deleting the edges of vertex-disjoint cycles that together cover all its vertices. Equivalently, $\mathcal{F}_n=\{\overline{C_{n_1}\cup\cdots\cup C_{n_s}}: s\ge1,\ n_i\ge3,\ \sum_{i=1}^{s}n_i=n\}$, where $s,n_1,\ldots,n_s$ are integers.

For $n\ge2k+3$, choose two disjoint subsets $S,T\subseteq V(K_{n-2})$ with $|S|=|T|=k-1$. Let $F_{n,k}$ be the graph obtained from $K_{n-2}$ by adding two new vertices $u$ and $v$, the edge $uv$, all edges joining $u$ to the vertices of $S$ and all edges joining $v$ to the vertices of $T$.

\begin{theorem}
	\label{thm:all2}
	Let $k\ge2$ be an integer and let $G$ be a connected graph of order $n\ge2k+2$ with minimum degree $\delta(G)\ge k$. Then the following statements hold.
	
	\begin{enumerate}[(i)]
		\item If $n\ge2k+3$ and $\rho_D(G)\le\rho_D(F_{n,k})$, then $\tau(G)\ge k$, unless $G\cong F_{n,k}$.
		
		\item If $n=2k+2$ and $\rho_D(G)\le n+1$, then $\tau(G)\ge k$, unless $G\cong H$ for some $H\in\mathcal{F}_n$.
	\end{enumerate}
\end{theorem}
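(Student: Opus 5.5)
We argue by contraposition: assume $\tau(G)\le k-1$ and $\delta(G)\ge k$, and show $\rho_D(G)\ge\rho_D(F_{n,k})$ (resp.\ $\ge n+1$), with equality only for the stated graphs. The tool is the Nash-Williams--Tutte theorem. Since $\tau(G)<k$, there is a partition $V(G)=V_1\cup\cdots\cup V_p$ with $p\ge2$ such that the number of crossing edges is at most $k(p-1)-1$. Adding edges strictly decreases $\rho_D$ for a connected graph, by Perron--Frobenius applied to the entrywise-nonincreasing distance matrix. So we may assume each $G[V_i]$ is complete and that exactly $k(p-1)-1$ edges cross the partition, placed so as to minimise $\rho_D$. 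The degree condition says every vertex of a part has degree at least $k$. In particular a singleton part $\{x\}$ sends at least $k$ edges out, and a part of size $s\le k$ contributes at least $s(k-s+1)$ crossing edges.

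\textbf{Case (i), $n\ge2k+3$.} The first step is a counting argument showing that the optimal partition has $p=2$. When $p\ge3$, the crossing edges number only $k(p-1)-1$, so some parts are nearly isolated. We would then show that merging parts, or adding edges while keeping the partition structure and the degree condition, strictly decreases $\rho_D$. This reduces to $p=2$ with $k-1$ crossing edges, parts $V_1,V_2$ with $|V_1|=s\le|V_2|$.

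With only $k-1$ crossing edges, every vertex of $V_1$ must have degree at least $k$. A vertex of $V_1$ not incident to a crossing edge has degree $s-1$, so $s\ge k+1$ unless all of $V_1$ is covered by crossing edges; the latter forces $s(k-s+1)\le k-1$, which is impossible for $1\le s\le k$. Hence $k+1\le s\le n-k-1$.

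Among such configurations, the distance matrix is constant on blocks given by the endpoints of the crossing edges. The crossing edges should form a matching or star structure; which of these is optimal must be decided. We compare $\rho_D$ via equitable quotient matrices, or via the Rayleigh quotient using the Perron vector. The expected outcome is that the extremal configuration is $s=k+1$ or $n-k-1$ with $V_1=\{u\}\cup S$ structure.

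At this point the statement's $F_{n,k}$ should be reinterpreted. In $F_{n,k}$, the cut between $\{u,v\}$ and the rest has $2(k-1)$ edges, which is exactly $k\cdot 2-2<2k-1$... Here the relevant partition is into three parts $\{u\},\{v\},V(K_{n-2})$, with $p=3$ and crossing edges $2k-1\le 2k-1$. So my reduction to $p=2$ is wrong, and the right plan is a case analysis on the sizes of small parts.

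Parts of size at most $k$ are "expensive". A bound of the form $\sum_{\text{small parts}} s(k-s+1)\le k(p-1)-1$ limits the small parts. This forces either one large part together with small parts, or two large parts.

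For two large parts, we compare against $K_{k-1}\vee(K_{n-k}\cup K_1)$-like graphs with minimum degree at least $k$. We show that their $\rho_D$ exceeds that of $F_{n,k}$, using $n\ge2k+3$ in the quotient-matrix estimate.

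With one large part and $p-1$ singletons, the count gives $k(p-1)\le$ crossing $+\binom{p-1}{2}$, since singletons may be adjacent to one another. This allows only $p-1\le 2$ in the relevant range, and $p=3$ yields exactly $F_{n,k}$ after optimising where the edges attach.

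The \textbf{main obstacle} is these spectral comparisons between candidate families. Each requires computing characteristic polynomials of $3\times3$ to $5\times5$ quotient matrices and showing one largest root dominates for all $n\ge2k+3$. We would handle this by evaluating one polynomial at the other's largest root and bounding its sign.

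\textbf{Case (ii), $n=2k+2$.} Here $n+1$ is the value of $\rho_D$ for any graph of diameter 2 in which every vertex has exactly one non-neighbour removed twice, i.e.\ for the complement of a 2-regular graph. Such a graph is $(n-3)$-regular, so its transmission is constant and equal to $(n-3)+2\cdot2=n+1$.

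In this case $k(n-1)=k(2k+1)$ while $e(K_n)=(k+1)(2k+1)$. Thus $\tau\ge k$ fails only if at least $2k+2=n$ edges are missing, or a Nash-Williams cut is violated.

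If $G$ misses at least $n$ edges, then the total transmission is at least $n(n-1)+2n$. Hence $\rho_D\ge 2W/n\ge n+1$, with equality iff $G$ is transmission-regular with diameter 2 and exactly $n$ missing edges each at distance 2, which makes the complement 2-regular, i.e.\ $G\in\mathcal{F}_n$. Note that $\delta\ge k$ rules out $P_2$ components in the complement.

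If instead at most $n-1$ edges are missing but a cut of $p\ge2$ parts is deficient, the degree bound makes this impossible or forces a large number of missing edges, contradicting the edge count. We would verify this by the same small-part counting as in Case (i).
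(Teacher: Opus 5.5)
Your part (ii) is correct and is essentially the paper's argument. A deficient partition forces at least $n$ non-edges, so $\rho_D(G)\ge 2W(G)/n\ge n+1$, and equality forces $D(G)\mathbf{1}=(n+1)\mathbf{1}$, hence $\overline G$ is $2$-regular. The step you defer to ``small-part counting'' is really Lemma~\ref{lem:sum} for $p\ge3$, which gives $e(G)\le\binom{n-p+1}{2}+k(p-1)-1\le n(n-3)/2$, together with Lemma~\ref{lem:cut} for $p=2$, which forces $|V_1|=|V_2|=k+1$.

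Part (i) has a genuine gap. Once you abandon the reduction to $p=2$, nothing in your plan controls the deficient partitions, and the counting claims meant to do so are false:
\begin{itemize}
\item The inequality $\sum_{\text{small}}s(k-s+1)\le k(p-1)-1$ double-counts edges between two small parts. It already fails for $F_{n,k}$ with the partition $\{\{u\},\{v\},V(K_{n-2})\}$, where the left side is $2k$ and the right side is $2k-1$.
\item The corrected singleton count $k(p-1)\le k(p-1)-1+\binom{p-1}{2}$ holds for every $p\ge3$, so it does not give $p-1\le2$. For example, in $K_{k+1}\cup K_{n-k-1}$ plus one edge, making each vertex of $K_{k+1}$ its own part gives a deficient partition with $p=k+2$ and $\delta\ge k$.
\item There need not be at most two large parts. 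A chain of copies of $K_{k+1}$ joined by single edges has arbitrarily many, with $\delta\ge k$.
\item For $p=2$ the competitors are $K_s\cup K_{n-s}$ plus $k-1$ edges with $s\ge k+1$. They are not graphs like $K_{k-1}\vee(K_{n-k}\cup K_1)$, which has $\delta=k-1$.
\end{itemize}
So your quotient-matrix comparisons would have to range over an unbounded family of configurations.

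The missing idea is that the crude bound you already use in (ii) settles almost all of (i), once you have an explicit upper estimate for $\rho_D(F_{n,k})$.
\begin{itemize}
\item The $3\times3$ equitable quotient matrix for $\{\{u,v\},S\cup T,V(K_{n-2})\setminus(S\cup T)\}$ gives $\rho_D(F_{n,k})<n+5-\frac{6k+10}{n}$.
\item On the other side, $\rho_D(G)\ge 2W(G)/n\ge 2(n-1)-2e(G)/n$, with $e(G)\le\sum_i\binom{n_i}{2}+k(p-1)-1$ bounded via Lemma~\ref{lem:sum}.
\item This lower bound already reaches $n+5-\frac{6k+10}{n}$ when $p\ge4$, when $p=2$ (using $n_1,n_2\ge k+1$ from $\delta\ge k$), and when $p=3$ with at most one singleton part.
\end{itemize}

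Only the case of two singleton parts $\{u\},\{v\}$ plus $R$ survives. There you still need the argument you gloss as ``optimising where the edges attach'':
\begin{itemize}
\item $uv\in E(G)$, since otherwise $d(u)+d(v)\ge2k$ edges cross; hence $d(u)=d(v)=k$.
\item Completing $R$ only lowers $\rho_D$.
\item Suppose some $w\in R$ is adjacent to both $u$ and $v$. Replace $uw$ by $uz$ for some $z\in R$ adjacent to neither; writing $H$ and $H'$ for the graphs before and after the switch, $\rho_D(H')<\rho_D(H)$.
\item To see this, note that the Perron vector $\mathbf{x}$ of $D(H')$ satisfies $x_w=x_z$, because some automorphism of $H'$ swaps $u,v$ and maps $z$ to $w$. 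So $\mathbf{x}^{\mathsf T}(D(H)-D(H'))\mathbf{x}=2x_u(x_z-x_w)=0$.
\item Strictness follows because equality would make $\mathbf{x}$ a Perron vector of $D(H)$ too, while the eigen-equations at $z$ differ by $x_u>0$.
\end{itemize}
Iterating this switch yields $F_{n,k}$.
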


We next consider a stronger minimum degree condition. A matching is a set of edges that have no common endpoints. For integers $k\ge2$, $\delta\ge 6k-4$ and $n\ge2\delta+2$, let $M_{n,\delta+1}^{k-1}$ be the graph obtained from $K_{\delta+1}\cup K_{n-\delta-1}$ by adding a matching of size $k-1$ between the two complete graphs. We obtain the following result.

\begin{theorem}
	\label{thm:strong}
	Let $k\ge2$ be an integer and let $G$ be a connected graph of order $n\ge2\delta+2$ with minimum degree $\delta\ge6k-4$. If $\rho_D(G)\le\rho_D(M_{n,\delta+1}^{k-1})$, then $\tau(G)\ge k$, unless $G\cong M_{n,\delta+1}^{k-1}$.
\end{theorem}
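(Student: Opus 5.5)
We argue by contradiction. Suppose $\tau(G)\le k-1$. By the Nash-Williams--Tutte theorem there is a partition $\mathcal{P}=\{V_1,\ldots,V_p\}$ of $V(G)$ with $p\ge2$ whose number of crossing edges satisfies
$$e_G(\mathcal{P})\le (k-1)(p-1).$$
Adding an edge never increases any distance, so it does not increase $\rho_D$; it strictly decreases $\rho_D$ when the graph stays connected and some distance drops. Hence we may add edges inside each $V_i$ until every $V_i$ is a clique. We may also add crossing edges until $e(\mathcal{P})=(k-1)(p-1)$. The resulting graph $G'$ satisfies $\rho_D(G)\ge\rho_D(G')$, with equality only if $G=G'$. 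So it suffices to show that every such saturated $G'$ with $\delta(G')\ge\delta$ satisfies $\rho_D(G')\ge\rho_D(M_{n,\delta+1}^{k-1})$, with equality only for $G'\cong M_{n,\delta+1}^{k-1}$.

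\textbf{Step 1: part sizes.} Let $|V_i|=s$. Each vertex of $V_i$ needs at least $\delta-s+1$ crossing edges, so
$$s(\delta-s+1)\le e(\mathcal{P})\le (k-1)(p-1).$$
For $p=2$, if $s\le\delta$ then the left side is at least $\delta>k-1$, a contradiction. So both parts have size at least $\delta+1$, and $n\ge 2\delta+2$ is exactly what makes this consistent. For $p\ge3$, use $\delta\ge 6k-4$ to show that almost every part has size at least about $\delta+1-O(k)$, since small parts force many crossing edges.

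\textbf{Step 2: the case $p\ge3$.} The Rayleigh quotient with the all-ones vector gives
$$\rho_D(G')\ge \frac{1}{n}\mathbf{1}^TD\mathbf{1},$$
and $\mathbf{1}^TD\mathbf{1}$ is at least twice the sum of all distances. Every pair of vertices in different parts that is not a crossing edge is at distance at least $2$. The number of such pairs is at least $\sum_{i<j}|V_i||V_j|-(k-1)(p-1)$. Compare this with an upper bound for $\rho_D(M_{n,\delta+1}^{k-1})$, obtained from its equitable quotient matrix. Its vertex classes are: the $k-1$ matched vertices on each side, and the unmatched vertices on each side. This gives a $4\times4$ quotient matrix whose largest eigenvalue is roughly $n+\frac{2(\delta+1)(n-\delta-1)}{n}$ plus a small correction. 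When $p\ge3$ and the parts are large, $\sum_{i<j}|V_i||V_j|$ exceeds $(\delta+1)(n-\delta-1)$ by a margin of order $\delta n$, which beats the correction terms. The inequality $\delta\ge6k-4$ is where I expect the estimates to close. This step is the main obstacle. The delicate subcase is one huge part plus small parts; there I would instead count the pairs at distance $\ge3$, or apply a two-class quotient bound directly to $G'$.

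\textbf{Step 3: the case $p=2$.} Here $|V_1|=a\ge\delta+1$, $|V_2|=n-a\ge\delta+1$, and there are exactly $k-1$ crossing edges. Call a vertex unmatched if it has no crossing edge. Distances across the parts are then $1$, $2$ or $3$. A vertex of $V_1$ covered by a crossing edge sees all of $V_2$ within distance $2$. An unmatched vertex of $V_1$ is at distance $2$ from the crossing endpoints in $V_2$ and at distance $3$ from the rest. So the cross distances depend on the set $X_1$ of endpoints in $V_1$ and the set $X_2$ of endpoints in $V_2$. A larger number of endpoints $|X_1|+|X_2|$ strictly lowers distances, and this is maximized exactly by a matching. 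Formally, we transform any crossing-edge configuration into a matching, each move increasing $|X_1|$ or $|X_2|$. The Perron vector is positive and entrywise distances weakly decrease, with at least one strict decrease, so $\rho_D$ strictly decreases. Thus $M$-type graphs are the minimizers for fixed $a$.

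With the matching fixed, $\rho_D$ is the largest root of the $4\times4$ quotient polynomial in $a$. I would show that this root is increasing in $a$ on $[\delta+1,n/2]$, because the number of distance-$3$ pairs, $(a-k+1)(n-a-k+1)$, grows with $a$. One way is to take the difference of characteristic polynomials at the larger root, which is a routine but messy computation. Hence the minimum is at $a=\delta+1$, which is $M_{n,\delta+1}^{k-1}$. Tracing back the equality cases gives $G\cong M_{n,\delta+1}^{k-1}$.
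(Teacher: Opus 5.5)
\textbf{The case $p\ge3$ has a genuine gap, and it is not limited to the ``one huge part'' subcase you flag.} A small point first: Nash--Williams--Tutte only gives $e_G(\mathcal P)\le k(p-1)-1$. This equals your $(k-1)(p-1)$ only when $p=2$.

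The real problem is the comparison itself. Your estimate $\rho_D(M_{n,\delta+1}^{k-1})\approx n+2(\delta+1)(n-\delta-1)/n$ is far too small. Every cross pair of $M_{n,\delta+1}^{k-1}$ that avoids the matched vertices is at distance $3$, so
\[
\rho_D(M_{n,\delta+1}^{k-1})\ge\frac{2W(M_{n,\delta+1}^{k-1})}{n}=n-1+\frac{2}{n}\Bigl[(\delta+1)(n-\delta-1)+(\delta+2-k)(n-\delta-k)-(k-1)\Bigr].
\]
By contrast, testing with $\mathbf 1$ and using cross distances $\ge2$ gives only $n-1+\frac{2}{n}\bigl[\sum_{i<j}|V_i||V_j|-e_G(\mathcal P)\bigr]$. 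Two configurations show this cannot work:
\begin{enumerate}[(1)]
\item Take $n=3(\delta+1)$ and three copies of $K_{\delta+1}$ joined by $2k-1$ edges. Your bound is $n-1+2(\delta+1)-\frac{2(2k-1)}{n}$. The display above is about $n-1+\frac{8}{3}(\delta+1)$, and it is strictly larger because $\delta+1>3(k-1)$.
\item Take parts of sizes $\delta+1,\delta+1,n-2\delta-2$ with $n\gg\delta^2$. Your bound is about $n+4\delta$. Testing $D(M_{n,\delta+1}^{k-1})$ with the vector equal to $3$ on $K_{\delta+1}$ and $1$ elsewhere already gives about $n+8\delta-6k$.
\end{enumerate}
The deficit is of order $\delta$, so no hypothesis like $\delta\ge6k-4$ can close it.

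\textbf{What is missing are three ingredients.}
\begin{enumerate}[(a)]
\item Choose $\mathcal P$ with $p$ minimal, so that each $r_i=e_G(V_i,V(G)\setminus V_i)\ge k$. Since $\sum_i r_i=2e_G(\mathcal P)\le2k(p-1)-2$, the three smallest $r_i$ are at most $2k-1\le\delta-1$. By Lemma~\ref{lem:cut}, those three parts each have size $\ge\delta+1\ge3(2k-1)$. This, not ``almost every part is large'', is what Step~1 must deliver.
\item Merge the other parts into $V'$. Any two of $V_1,V_2,V'$ then have at least $(|V_a|-2k+1)(|V_b|-2k+1)$ pairs at distance $\ge3$, so the average cross distance exceeds $\frac{12}{5}$. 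This is where $\delta\ge6k-4$ is used. It is also why counting distance-$3$ pairs must be the main tool throughout, not a fallback.
\item Test $D(G)$ with the Perron vector of the $3\times3$ matrix with diagonal entries $n_i-1$ and off-diagonal entries $\frac{12}{5}n_j$, rather than with $\mathbf 1$. Bound $\rho_D(M_{n,\delta+1}^{k-1})$ from above by the two-block quotient with every cross distance set to $3$. Its largest eigenvalue is $\theta-1$, where $\theta=\frac{n+\sqrt{n^2+32(\delta+1)(n-\delta-1)}}{2}$. Then $n\ge3(\delta+1)$ and $\theta\le2n$ give $\varphi_Q(\theta-1)<0$.
\end{enumerate}

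\textbf{Step~3 also contains a false claim.} You say each move toward a matching weakly decreases every distance. But any move that keeps exactly $k-1$ crossing edges deletes some crossing edge $uv$, and $d(u,v)$ rises from $1$ to $2$.

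The repair, as in Lemma~\ref{lem:matching}, is a one-shot comparison. Choose $A_1\supseteq X_1$ and $B_1\supseteq X_2$, each of size $k-1$, and put a matching between $A_1$ and $B_1$. Test with the Perron vector of the resulting matching graph, which is constant on $A_1$, $V_1\setminus A_1$, $B_1$ and $V_2\setminus B_1$. Both graphs have exactly $k-1$ adjacent pairs in $A_1\times B_1$, so the $\pm1$ changes cancel in the block sums. Only the surplus of distance-$3$ pairs survives, so the difference is nonnegative, and positive unless the crossing edges already form a matching.

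Your plan for monotonicity in $a$ follows the paper's route and does work. The characteristic polynomials of the $4\times4$ quotients for $|V_1|=a$ and $|V_1|=\delta+1$ differ by $(a-\delta-1)(n-a-\delta-1)\bigl(-8x^2+(2k-18)x+3k^2-14k+11\bigr)$. This is negative for $x\ge k-1$ when $a>\delta+1$.
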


\begin{figure}[htbp]
	\centering
	
	\tikzset{
		vertex/.style={
			circle,
			fill=black,
			inner sep=0pt,
			minimum size=5.8pt
		},
		clique/.style={
			draw=black!80,
			fill=black!2,
			line width=0.85pt
		},
		subset/.style={
			draw=blue!65!black,
			fill=white,
			dash pattern=on 3pt off 2pt,
			line width=0.7pt
		},
		graph edge/.style={
			draw=black,
			line width=0.85pt,
			line cap=round
		}
	}
	
	\resizebox{0.75\linewidth}{!}{
		\begin{tikzpicture}[font=\small]
			\path[use as bounding box] (-6.00,0.25) rectangle (6.00,5.30);
			
			\coordinate (CL) at (-2.35,2.85);
			\coordinate (CR) at ( 2.35,2.85);
			
			\draw[clique] (CL) ellipse (2cm and 2.6cm);
			\draw[clique] (CR) ellipse (2cm and 2.4cm);
			
			\draw[subset] (-1.62,2.82) ellipse (1.02cm and 1.55cm);
			\draw[subset] ( 1.62,2.82) ellipse (1.02cm and 1.55cm);
			
			\coordinate (a1) at (-1.24,3.83);
			\coordinate (a2) at (-1.24,3.13);
			\coordinate (ak) at (-1.24,1.81);
			
			\coordinate (b1) at (1.24,3.83);
			\coordinate (b2) at (1.24,3.13);
			\coordinate (bk) at (1.24,1.81);

			\draw[graph edge] (a1) -- (b1);
			\draw[graph edge] (a2) -- (b2);
			\draw[graph edge] (ak) -- (bk);

			\foreach \vertexname in {a1,a2,ak,b1,b2,bk}
			\node[vertex] at (\vertexname) {};

			\node[left=7pt] at (a1) {$a_1$};
			\node[left=7pt] at (a2) {$a_2$};
			\node[left=7pt] at (ak) {$a_{k-1}$};
			
			\node[right=7pt] at (b1) {$b_1$};
			\node[right=7pt] at (b2) {$b_2$};
			\node[right=7pt] at (bk) {$b_{k-1}$};

			\node[inner sep=0pt] at (-1.24,2.5) {$\vdots$};
			\node[inner sep=0pt] at ( 1.24,2.5) {$\vdots$};

			\node[font=\normalsize] at (-1.62,1.0) {$A$};
			\node[font=\normalsize] at ( 1.62,1.0) {$B$};

			\node[font=\normalsize] at (-3.4,2.82) {$K_{\delta+1}$};
			\node[font=\normalsize] at (3.4,2.82) {$K_{n-\delta-1}$};
			
		\end{tikzpicture}
	}
	
	\par\vspace{0.2em}
	\caption{The graph $M_{n,\delta+1}^{k-1}$}
	\label{fig:Mndelta}
\end{figure}

The rest of the paper is organized as follows. In Section 2, we present some preliminary results. In Section 3, we prove Theorem \ref{thm:all1}. In Section 4, we prove Theorem \ref{thm:all2}. In Section 5, we prove Theorem \ref{thm:strong}. In Section 6, we give some concluding remarks.

\section{Preliminaries}

In this section, we present some preliminary results used in our proofs. For a partition $\mathcal{P}=\{V_1,\ldots,V_p\}$ of $V(G)$ into nonempty parts, let $e_G(\mathcal{P})=\sum_{1\le i<j\le p}e_G(V_i,V_j)$ denote the number of edges with ends in distinct parts. We first recall the classical Tree Packing Theorem of Nash-Williams and Tutte.

\begin{theorem}[\cite{Nash1961,Tutte1961}]
	\label{thm:packing}
	Let $G$ be a connected graph of order at least two and let $k\ge1$ be an integer. Then $\tau(G)\ge k$ if and only if $e_G(\mathcal{P})\ge k(p-1)$ for every partition $\mathcal{P}$ of $V(G)$ into $p\ge2$ nonempty parts.
\end{theorem}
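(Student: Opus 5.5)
This is the classical Nash-Williams--Tutte Tree Packing Theorem, and the paper cites it from \cite{Nash1961,Tutte1961} rather than proving it. Still, I would argue as follows.

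\textbf{Necessity.} Suppose $G$ contains $k$ edge-disjoint spanning trees $T_1,\ldots,T_k$, and fix a partition $\mathcal{P}$ of $V(G)$ into $p\ge2$ parts. Contracting each part of $\mathcal{P}$ to a single vertex sends each $T_i$ to a connected multigraph on $p$ vertices. Such a multigraph has at least $p-1$ edges, and these are edges of $T_i$ joining distinct parts. Since the trees are edge-disjoint, summing over $i$ gives $e_G(\mathcal{P})\ge k(p-1)$.

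\textbf{Sufficiency.} I would use the matroid union theorem applied to the cycle matroid $M(G)$, whose rank function is $r(F)=n-c(F)$, where $c(F)$ is the number of components of the spanning subgraph $(V,F)$. The union of $k$ copies of $M(G)$ has rank
\[
\min_{F\subseteq E}\bigl(|E\setminus F|+k\,r(F)\bigr).
\]
This rank is at least $k(n-1)$ exactly when $G$ has $k$ disjoint independent sets of total size $k(n-1)$. Each independent set is a forest with at most $n-1$ edges, so equality forces each of the $k$ forests to be a spanning tree.

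It therefore suffices to show $|E\setminus F|+k(n-c(F))\ge k(n-1)$ for every $F\subseteq E$. Fix $F$ and let $\mathcal{P}$ be the partition of $V$ into the vertex sets of the components of $(V,F)$, so $p=c(F)$.
\begin{itemize}
\item If $p=1$, then $k(n-c(F))=k(n-1)$ and the inequality is immediate.
\item If $p\ge2$, every edge between distinct parts lies outside $F$, so $|E\setminus F|\ge e_G(\mathcal{P})\ge k(p-1)$. Hence $|E\setminus F|+k(n-p)\ge k(n-1)$.
\end{itemize}

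\textbf{Main obstacle.} The only deep ingredient is the matroid union (Edmonds) rank formula itself. If a self-contained argument is required, I would instead follow Tutte's augmenting-path proof. Take a $k$-tuple of edge-disjoint forests with maximum total size. If some forest is not spanning, build the usual exchange sequence of edges. When no augmentation exists, the vertex sets reachable by the exchange process define a partition $\mathcal{P}$ with $e_G(\mathcal{P})<k(p-1)$, contradicting the hypothesis.
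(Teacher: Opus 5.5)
Your proof is correct. The paper itself gives no proof to compare against: it quotes the theorem from Nash-Williams and Tutte as a black box. It only ever uses the sufficiency direction, once to obtain $\tau(K_{2k})\ge k$, and in all later proofs in the contrapositive form ``$\tau(G)\le k-1$ gives a partition with $e_G(\mathcal P)\le k(p-1)-1$''.

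The classical graph-theoretic proofs take $k$ edge-disjoint forests of maximum total size and use edge exchanges to locate a violating partition. You take the matroid route instead, reading sufficiency off the matroid union rank formula for $k$ copies of the cycle matroid. Two steps there are exactly what is needed:
\begin{itemize}
\item your verification that $|E\setminus F|+k(n-c(F))\ge k(n-1)$ for all $F\subseteq E$, using the component partition of $(V,F)$ and handling $c(F)=1$ separately;
\item your observation that the $k$ independent sets may be taken disjoint, after which they are forced to be spanning trees.
\end{itemize}

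This route is short and more general. The same computation gives $\max|F_1\cup\cdots\cup F_k|=\min_{\mathcal P}\bigl(e_G(\mathcal P)+k(n-p)\bigr)$ over all partitions, including $p=1$, and it yields Edmonds' base packing theorem for arbitrary matroids. The cost is that all the depth moves into the matroid union theorem, whose proof is itself an augmenting-path argument of comparable difficulty. Your last paragraph only names that argument: the claim that a non-augmentable family yields a partition with $e_G(\mathcal P)<k(p-1)$ is asserted, not shown. So it is not a self-contained alternative proof.

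One small precision in the necessity part: after contracting the parts, discard the loops. The loopless image of $T_i$ is still connected on $p$ vertices, so it has at least $p-1$ edges, each coming from an edge of $T_i$ between distinct parts.
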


Let $M$ be a real matrix of order $n$, and let $\Pi=\{X_1,\ldots,X_t\}$ be a partition of $\{1,\ldots,n\}$ into nonempty parts. Let $M_{ij}$ for the block with rows indexed by $X_i$ and columns indexed by $X_j$. The quotient matrix of $M$ with respect to $\Pi$ is the matrix $B_\Pi=(b_{ij})_{1\le i,j\le t}$, where $b_{ij}$ is the average row sum of $M_{ij}$. If every block $M_{ij}$ has constant row sum, then $\Pi$ is called an equitable partition and $B_\Pi$ an equitable quotient matrix. For a matrix $M$ with real eigenvalues, let $\lambda_1(M)$ denote its largest eigenvalue.

\begin{lemma}[\cite{Brouwer2012}, \cite{Godsil2001}]
	\label{lem:quotient}
	Let $M$ be a real symmetric matrix. If $B_\Pi$ is an equitable quotient matrix of $M$, then every eigenvalue of $B_\Pi$ is an eigenvalue of $M$. Furthermore, if $M$ is nonnegative and irreducible, then $\lambda_1(M)=\lambda_1(B_\Pi)$.
\end{lemma}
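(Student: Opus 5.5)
The plan is to encode the partition by its characteristic matrix and turn the equitable condition into a matrix intertwining relation. Let $S$ be the $n\times t$ matrix whose $(v,j)$-entry is $1$ if $v\in X_j$ and $0$ otherwise. Since the parts are nonempty and pairwise disjoint, the columns of $S$ have disjoint nonempty supports, so $S$ has full column rank $t$. For $v\in X_i$, the $(v,j)$-entry of $MS$ is the row sum of $M_{ij}$ in the row indexed by $v$. Because $\Pi$ is equitable, this row sum equals $b_{ij}$ for every $v\in X_i$, which is exactly the $(v,j)$-entry of $SB_\Pi$. Hence
\begin{equation*}
MS=SB_\Pi .
\end{equation*}

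For the first assertion, take any eigenpair $B_\Pi x=\lambda x$ with $x\neq 0$ (possibly complex). Then $M(Sx)=SB_\Pi x=\lambda Sx$. Since $S$ has full column rank, $Sx\neq0$, so $\lambda$ is an eigenvalue of $M$. In particular all eigenvalues of $B_\Pi$ are real, because $M$ is real symmetric. It follows that $\lambda_1(B_\Pi)$ makes sense and $\lambda_1(B_\Pi)\le\lambda_1(M)$.

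For the reverse inequality, assume now that $M$ is nonnegative and irreducible. By Perron--Frobenius, there is an eigenvector $y$ with all entries positive such that $My=\lambda_1(M)y$. Transposing the intertwining relation and using $M^{T}=M$ gives $S^{T}M=B_\Pi^{T}S^{T}$. Therefore
\begin{equation*}
B_\Pi^{T}(S^{T}y)=S^{T}My=\lambda_1(M)\,S^{T}y .
\end{equation*}
The vector $S^{T}y$ has $j$-th entry $\sum_{v\in X_j}y_v>0$, so it is nonzero. Thus $\lambda_1(M)$ is an eigenvalue of $B_\Pi^{T}$, and hence of $B_\Pi$. This gives $\lambda_1(B_\Pi)\ge\lambda_1(M)$, and combined with the first part, $\lambda_1(M)=\lambda_1(B_\Pi)$.

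There is no real obstacle here. The only points needing care are that $B_\Pi$ is generally not symmetric, and that the eigenvector $S^T y$ must be nonzero. We handle the first by passing to $B_\Pi^{T}$ via the symmetry of $M$, and the second by the positivity of the Perron vector.
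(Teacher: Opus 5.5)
Your proof is correct. The paper gives no proof of this lemma; it simply quotes it from Brouwer--Haemers and Godsil--Royle. Your first part, via the intertwining identity $MS=SB_\Pi$ and the injectivity of the characteristic matrix $S$, is the standard argument.

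For the Perron part, a common alternative runs in the opposite direction. One takes a nonnegative eigenvector $x$ of the nonnegative matrix $B_\Pi$ for $\lambda_1(B_\Pi)$ and notes that $Sx$ is a nonnegative eigenvector of $M$. One then invokes the Perron--Frobenius fact that an irreducible nonnegative matrix has nonnegative eigenvectors only for its spectral radius.

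You instead push the positive Perron vector $y$ of $M$ down to the positive vector $S^{\mathsf T}y$, which is an eigenvector of $B_\Pi^{\mathsf T}$ because $M=M^{\mathsf T}$. This needs only the existence of a positive Perron vector of $M$, not the uniqueness statement and not Perron--Frobenius applied to $B_\Pi$. The price is that you use symmetry, whereas the lifting argument does not. Even that can be avoided: multiplying $MS=SB_\Pi$ on the left by a positive left Perron vector $z^{\mathsf T}$ of $M$ makes $z^{\mathsf T}S$ a positive left eigenvector of $B_\Pi$ for $\lambda_1(M)$.
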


\begin{lemma}[{\cite{Gu2016}}]
	\label{lem:cut}
	Let $G$ be a connected graph with minimum degree $\delta$, and let $U$ be a nonempty proper subset of $V(G)$. If $e_G(U,V(G)\setminus U)\le\delta-1$, then $|U|\ge\delta+1$.
\end{lemma}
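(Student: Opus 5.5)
The statement to prove is Lemma~\ref{lem:cut}: if $e_G(U,V(G)\setminus U)\le\delta-1$ for a nonempty proper subset $U$, then $|U|\ge\delta+1$. The plan is a counting argument comparing the degree sum over $U$ with the number of edges available inside $U$. Write $u=|U|$ and $W=V(G)\setminus U$. Every vertex of $U$ has degree at least $\delta$, so
\[
\sum_{x\in U} d_G(x)\ \ge\ \delta u .
\]
On the other hand, the same sum counts each edge of $G[U]$ twice and each edge of $E_G(U,W)$ once:
\[
\sum_{x\in U} d_G(x)=2e(G[U])+e_G(U,W)\le u(u-1)+e_G(U,W).
\]

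Suppose, for contradiction, that $u\le\delta$. Combining the two displays gives
\[
e_G(U,W)\ \ge\ \delta u-u(u-1)=u(\delta-u+1).
\]
For $1\le u\le\delta$, the function $f(u)=u(\delta-u+1)$ is a concave quadratic in $u$, so its minimum on this interval is attained at an endpoint. At both endpoints, $f(1)=\delta$ and $f(\delta)=\delta$. Hence $e_G(U,W)\ge\delta$, which contradicts the hypothesis $e_G(U,W)\le\delta-1$. Therefore $u\ge\delta+1$.

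The only point that needs care is the endpoint evaluation of the quadratic, which closes the argument. Connectivity of $G$ is not used beyond ensuring that $\delta$ is well defined, and nonemptiness of $U$ ensures $u\ge1$, so the interval $1\le u\le\delta$ is the right range to check. I expect no real obstacle here; it is a routine degree-counting argument.
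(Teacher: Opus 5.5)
Your argument is correct. The paper does not prove this lemma itself but cites it from Gu et al., and your degree count $\delta|U|\le\sum_{x\in U}d_G(x)\le |U|(|U|-1)+e_G(U,V(G)\setminus U)$ is the standard proof; the endpoint check on the concave quadratic is valid (equivalently, $|U|\le\delta$ would force $(|U|-1)(\delta-|U|)\le-1$, which is impossible), and the case $\delta=0$ is vacuous.
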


We also need the following combinatorial inequality.

\begin{lemma}[\cite{Wei2022}]
	\label{lem:sum}
	Let $a_1,\ldots,a_\ell$ be integers with $\ell\ge2$ and $0\le a_1\le\cdots\le a_\ell$. Let $x_1,\ldots,x_\ell$ be integers satisfying $x_i\ge a_i$ for $1\le i\le\ell$ and $\sum_{i=1}^{\ell}x_i=n$. Then
	\[
	\sum_{i=1}^{\ell}\binom{x_i}{2}
	\le
	\binom{n-\sum_{i=1}^{\ell-1}a_i}{2}
	+\sum_{i=1}^{\ell-1}\binom{a_i}{2}.
	\]
	Moreover, the equality holds if and only if
	$x_i=a_i$ for any $i\in\{1,2,\ldots,l-1\}$ and
	$x_l=n-\sum_{i=1}^{l}a_i$.
\end{lemma}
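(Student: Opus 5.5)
The plan is an exchange (smoothing) argument based on the identity $\binom{y+1}{2}-\binom{y}{2}=y$. Moving one unit from $x_i$ to $x_j$ (with $x_i-1\ge a_i$ still feasible) changes $\sum\binom{x_t}{2}$ by $x_j-(x_i-1)=x_j-x_i+1$. This change is strictly positive whenever $x_j\ge x_i$. So concentrating all the "excess" $x_i-a_i$ on a single coordinate increases the sum, and it only remains to decide which coordinate should receive it. Throughout, write $S=\sum_{i=1}^{\ell}a_i$ and $N=n-S\ge 0$; here $N\ge0$ holds because $x_i\ge a_i$ and $\sum x_i=n$.

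\textbf{Step 1.} Fix a feasible $(x_1,\dots,x_\ell)$ and let $j$ be an index with $x_j$ maximum. For every $i\ne j$ with $x_i>a_i$, repeatedly move one unit from $x_i$ to $x_j$. Throughout this process $x_j$ remains the maximum, so each move raises the sum by $x_j-x_i+1\ge1$. The process ends at the vector $y^{(j)}$ with $y^{(j)}_i=a_i$ for $i\ne j$ and $y^{(j)}_j=N+a_j$. Hence
\[
\sum_{i=1}^{\ell}\binom{x_i}{2}\le f(j):=\binom{N+a_j}{2}+\sum_{i\ne j}\binom{a_i}{2},
\]
and the inequality is strict unless $x=y^{(j)}$ already.

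\textbf{Step 2.} Compare $f(j)$ with $f(\ell)$. A direct computation gives
\[
f(\ell)-f(j)=\Bigl[\binom{N+a_\ell}{2}-\binom{a_\ell}{2}\Bigr]-\Bigl[\binom{N+a_j}{2}-\binom{a_j}{2}\Bigr]=N(a_\ell-a_j)\ge0,
\]
since $a_j\le a_\ell$. Moreover, $f(\ell)=\binom{n-\sum_{i=1}^{\ell-1}a_i}{2}+\sum_{i=1}^{\ell-1}\binom{a_i}{2}$ is exactly the right-hand side of Lemma \ref{lem:sum}. This proves the inequality.

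\textbf{Equality.} Equality forces $x=y^{(j)}$ and $N(a_\ell-a_j)=0$. If $N=0$, then every $y^{(j)}$ equals $(a_1,\dots,a_\ell)$, which is the stated extremal vector. If $a_j=a_\ell$, then $y^{(j)}$ agrees with the extremal vector after swapping the roles of two indices carrying equal values $a_j=a_\ell$. I expect this to be the main delicate point. The ``only if'' direction must be read with ties among the largest $a_i$ identified, and with $x_\ell=n-\sum_{i=1}^{\ell-1}a_i$ (the $\sum_{i=1}^{l}$ in the statement should be $\sum_{i=1}^{l-1}$). I would state this explicitly: equality holds precisely when $x_i=a_i$ for all but one index $j$ with $a_j=a_\ell$ or $N=0$, and in the generic case with no ties this index must be $j=\ell$. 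The converse direction, that the extremal vector attains equality, is immediate by substitution.
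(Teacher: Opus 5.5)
Your proof is correct and complete. The paper does not prove this lemma; it imports it from \cite{Wei2022}, so there is no in-paper argument to compare against. Your smoothing argument is a self-contained proof of the inequality. You push all excess onto a coordinate $j$ of maximum value, where each unit move gains $x_j-x_i+1\ge1$. You then compare endpoints via $f(\ell)-f(j)=N(a_\ell-a_j)\ge0$. This also shows the inequality is strict whenever $x$ is not of the form $y^{(j)}$.

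Your reading of the equality clause is also right. As written, the ``only if'' direction fails whenever $N>0$ and some $j<\ell$ has $a_j=a_\ell$. For instance, $\ell=2$, $a_1=a_2=1$, $n=4$, $x=(3,1)$ gives equality $3=3$ with $x_1\ne a_1$. The upper limit in $x_\ell=n-\sum_{i=1}^{\ell}a_i$ should be $\ell-1$; otherwise the equality vector does not sum to $n$ unless $a_\ell=0$. The correct characterization is the one you give: equality holds if and only if $x_i=a_i$ for all $i\ne j$, for some $j$ with $a_j=a_\ell$. This holds automatically when $N=0$.

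This defect is harmless for the paper. Every application in the proofs of Theorems \ref{thm:all1}, \ref{thm:all2} and \ref{thm:strong} uses only the inequality, never the equality characterization. Many of those applications have tied lower bounds, such as $(1,\ldots,1)$, $(k+1,k+1)$ or $(\delta+1,\delta+1,\delta+1)$, where the stated uniqueness would indeed fail.
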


We next recall some properties of the distance spectral radius. 

\begin{lemma}
	\label{lem:edge}
	Let $e$ be an edge of a connected graph $G$ such that $G-e$ is also connected. Then $\rho_D(G)<\rho_D(G-e)$.
\end{lemma}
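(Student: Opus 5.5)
The plan is to compare the distance matrices entrywise and then apply Perron--Frobenius theory.

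First, compare distances. Let $H=G-e$, which is connected by hypothesis, and note $V(H)=V(G)$. Every path in $H$ is also a path in $G$, so $d_G(x,y)\le d_H(x,y)$ for all $x,y$. For the endpoints $u,v$ of $e=uv$ we have $d_G(u,v)=1$. Since $H$ is simple and no longer contains the edge $uv$, we get $d_H(u,v)\ge 2$. Hence $D(H)-D(G)$ is a nonnegative matrix that is nonzero, with strictly positive entries at positions $(u,v)$ and $(v,u)$.

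Next, set up the Perron vector. Because $G$ is connected, every off-diagonal entry of $D(G)$ is positive, so $D(G)$ is irreducible (when $n\ge 2$). The Perron--Frobenius theorem then gives a unit eigenvector $x$ of $D(G)$ for $\rho_D(G)$ with all entries strictly positive.

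Finally, apply the Rayleigh quotient. For the symmetric matrix $D(H)$,
\[
\rho_D(H)\ \ge\ x^{T}D(H)x\ =\ x^{T}D(G)x+x^{T}\bigl(D(H)-D(G)\bigr)x\ \ge\ \rho_D(G)+2\bigl(d_H(u,v)-1\bigr)x_ux_v.
\]
The last term is strictly positive because $x_u,x_v>0$ and $d_H(u,v)-1\ge 1$. This gives $\rho_D(G)<\rho_D(G-e)$.

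The only point needing care is the strict positivity of the Perron vector, which is what makes the inequality strict. It follows from the irreducibility of $D(G)$ noted above.
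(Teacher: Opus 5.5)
Your argument is correct and complete: $D(G-e)\ge D(G)$ entrywise with $d_{G-e}(u,v)\ge 2>1=d_G(u,v)$, and evaluating the Rayleigh quotient of $D(G-e)$ at the strictly positive Perron vector of $D(G)$ gives the strict inequality. The paper only recalls this lemma as a known fact without proof, and your reasoning is the standard argument for it — the same Rayleigh-quotient-with-positive-Perron-vector technique the paper uses in Lemma~\ref{lem:matching} and in the switching step of Theorem~\ref{thm:all2}~(i).
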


The Wiener index of a connected graph $G$ of order $n$ is defined as $W(G)=\sum_{1\le i<j\le n}d_G(v_i,v_j)$. By the Rayleigh quotient principle, $\rho_D(G)\ge\frac{\mathbf{1}^{\mathsf T}D(G)\mathbf{1}}{\mathbf{1}^{\mathsf T}\mathbf{1}}=\frac{2W(G)}{n}$, where $\mathbf{1}$ is the all-ones column vector of order $n$.

We also use the following lower bound due to Zhou and Trinajsti\'{c} \cite{Zhou2007}.

\begin{lemma}[\cite{Zhou2007}]
	\label{lem:square}
	Let $G$ be a connected graph of order $n\ge2$. Then
	\[
	\rho_D(G)^2\ge
	\frac{1}{n}\sum_{i=1}^{n}
	\left(\sum_{j=1}^{n}d_G(v_i,v_j)\right)^2.
	\]
\end{lemma}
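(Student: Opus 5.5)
We apply the Rayleigh quotient principle to $D(G)^2$ rather than to $D(G)$ itself, using the all-ones vector $\mathbf{1}$ of order $n$ as a test vector, in the same spirit as the Wiener-index bound $\rho_D(G)\ge 2W(G)/n$ recalled above.

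First, we identify the largest eigenvalue of $D(G)^2$. Since $G$ is connected with $n\ge2$, the matrix $D(G)$ is real, symmetric, nonnegative and irreducible; in fact every off-diagonal entry is positive. Its eigenvalues $\mu_1\ge\mu_2\ge\cdots\ge\mu_n$ are therefore real. By the Perron--Frobenius theorem, $\rho_D(G)=\mu_1\ge|\mu_i|$ for all $i$. The matrix $D(G)^2$ is symmetric with eigenvalues $\mu_1^2,\ldots,\mu_n^2$, so its largest eigenvalue is $\max_i\mu_i^2=\rho_D(G)^2$. This is the one point that needs care: for a general symmetric matrix, the largest eigenvalue of the square is the square of the eigenvalue of largest modulus, not of the largest eigenvalue. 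Perron--Frobenius guarantees that here these coincide.

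Second, we evaluate the Rayleigh quotient of $D(G)^2$ at $\mathbf{1}$. By the Rayleigh quotient principle for the symmetric matrix $D(G)^2$,
\[
\rho_D(G)^2\ \ge\ \frac{\mathbf{1}^{\mathsf T}D(G)^2\mathbf{1}}{\mathbf{1}^{\mathsf T}\mathbf{1}}
=\frac{(D(G)\mathbf{1})^{\mathsf T}(D(G)\mathbf{1})}{n}
=\frac{\|D(G)\mathbf{1}\|^2}{n}.
\]
The $i$-th entry of $D(G)\mathbf{1}$ is the transmission $\sum_{j=1}^{n}d_G(v_i,v_j)$ of $v_i$. Expanding the squared norm coordinatewise therefore gives exactly $\frac1n\sum_{i=1}^{n}\bigl(\sum_{j=1}^{n}d_G(v_i,v_j)\bigr)^2$, which is the claimed bound.

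No step is genuinely difficult; the only subtlety is the spectral identification in the first step. If one also wanted the equality case, it occurs exactly when $\mathbf{1}$ is an eigenvector of $D(G)^2$ for $\rho_D(G)^2$. Using the Perron vector, this can be shown to happen precisely when $G$ is transmission regular, but the inequality itself does not require this.
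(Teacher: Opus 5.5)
Your proof is correct and complete: Perron--Frobenius (which for a nonnegative symmetric matrix gives $\rho_D(G)=\max_i|\mu_i|$) identifies the largest eigenvalue of $D(G)^2$ with $\rho_D(G)^2$, and the Rayleigh quotient of $D(G)^2$ at $\mathbf{1}$ equals $\frac1n\|D(G)\mathbf{1}\|^2$. The paper gives no proof of its own and simply cites Zhou and Trinajsti\'{c}; your test-vector argument is the standard proof of that bound, so there is nothing to add.
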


\section{Proof of Theorem \ref{thm:all1}}

\begin{lemma}
	\label{lem:complete}
	Let $k\ge1$ be an integer. $\tau(K_{2k})=k$.
\end{lemma}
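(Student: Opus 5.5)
The plan is to prove the upper bound $\tau(K_{2k})\le k$ by counting edges, and the lower bound $\tau(K_{2k})\ge k$ either by an explicit construction or by Theorem \ref{thm:packing}.

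\emph{Upper bound.} $K_{2k}$ has $\binom{2k}{2}=k(2k-1)$ edges. Each spanning tree uses $2k-1$ edges, so at most $k(2k-1)/(2k-1)=k$ edge-disjoint spanning trees fit.

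\emph{Lower bound via Theorem \ref{thm:packing}.} Let $\mathcal{P}=\{V_1,\ldots,V_p\}$ be a partition of $V(K_{2k})$ with $p\ge2$ and sizes $x_i=|V_i|\ge1$. In $K_{2k}$ every pair of vertices in distinct parts is joined, so
\[
e(\mathcal{P})=\binom{2k}{2}-\sum_{i=1}^{p}\binom{x_i}{2}.
\]
We bound the sum by Lemma \ref{lem:sum}, applied with $a_i=1$ and $\sum_i x_i=2k$. This gives
\[
\sum_{i=1}^{p}\binom{x_i}{2}\le\binom{2k-p+1}{2}.
\]
It then suffices to show
\[
\binom{2k}{2}-\binom{2k-p+1}{2}\ge k(p-1).
\]
Write $m=p-1$, with $1\le m\le 2k-1$. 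The left side equals
\[
\frac{2k(2k-1)-(2k-m)(2k-m-1)}{2}=\frac{m(4k-m-1)}{2}.
\]
So the inequality reduces to $4k-m-1\ge 2k$, that is, $m\le 2k-1$. This holds because $p\le 2k$. Theorem \ref{thm:packing} then yields $\tau(K_{2k})\ge k$.

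\emph{Alternative explicit construction.} One can instead use the classical decomposition of $K_{2k}$ into $k$ Hamiltonian paths. Place the vertices $0,1,\ldots,2k-1$ on $\mathbb{Z}_{2k}$ and, for each $i$, take the zigzag path $i,\,i+1,\,i-1,\,i+2,\,i-2,\ldots$. As $i$ ranges over $0,\ldots,k-1$, these paths are pairwise edge-disjoint and each spans $K_{2k}$.

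The only delicate point is the edge-count verification. On the packing-theorem route it is the algebraic inequality, which holds with equality at $p=2k$; on the constructive route it is checking that the zigzag paths are disjoint. I will use the packing-theorem route, since it relies only on Lemma \ref{lem:sum} and Theorem \ref{thm:packing} from the paper.
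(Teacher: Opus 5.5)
Your proposal is correct and follows essentially the same route as the paper: the edge count gives $\tau(K_{2k})\le k$, and Theorem \ref{thm:packing} gives $\tau(K_{2k})\ge k$. The only difference is cosmetic: you bound $\sum_i\binom{x_i}{2}$ by Lemma \ref{lem:sum}, whereas the paper substitutes $|V_i|\le 2k-p+1$ directly into $e(\mathcal P)=\tfrac12\sum_i|V_i|(2k-|V_i|)$. Both reduce to the condition $p\le 2k$.
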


\begin{proof}
	Since every spanning tree of $K_{2k}$ has $2k-1$ edges, we have $\tau(K_{2k})\le\binom{2k}{2}/(2k-1)=k$.
	
	Let $\mathcal{P}=\{V_1,\ldots,V_p\}$ be any partition of $V(K_{2k})$ into $p\ge2$ nonempty parts. Since $|V_i|\le2k-p+1$ for each $i$, we obtain
	\[
	e_{K_{2k}}(\mathcal{P})
	=\frac12\sum_{i=1}^{p}|V_i|(2k-|V_i|)
	\ge\frac{p-1}{2}\sum_{i=1}^{p}|V_i|
	=k(p-1).
	\]
	By Theorem \ref{thm:packing}, $\tau(K_{2k})\ge k$. Hence $\tau(K_{2k})=k$.
\end{proof}

We now prove Theorem \ref{thm:all1} $(i)$.

\begin{proof}[Proof of Theorem \ref{thm:all1} (i)]
	Suppose that $\tau(G)\le k-1$. Since $n=2k$, Lemma \ref{lem:complete} gives $\tau(K_n)=k$, hence $G$ is not complete. Choose an edge $e\in E(\overline{G})$. Then $G$ is a spanning subgraph of $K_n-e$. By Lemma \ref{lem:edge}, we have $\rho_D(G)\ge\rho_D(K_n-e)$, with strict inequality unless $G\cong K_n-e$. Together with the assumption $\rho_D(G)\le\rho_D(K_n-e)$, we have $G\cong K_n-e$. This completes the proof.
\end{proof}

\begin{lemma}
	\label{lem:odd-bound}
	Let $k\ge2$ be an integer and $n=2k+1$. Then
	\begin{equation}
		\label{eq:odd-bound}
		n+\frac1n
		<\rho_D\left(\overline{P_3\cup(k-1)P_2}\right)
		<n+\frac1n+\frac1{n^2}.
	\end{equation}
\end{lemma}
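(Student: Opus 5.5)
Since $\overline{P_3\cup(k-1)P_2}$ has diameter $2$, I would first write its distance matrix explicitly and then reduce to a $3\times 3$ equitable quotient matrix. Put $H=P_3\cup(k-1)P_2$ and $G=\overline{H}$. Two distinct vertices are at distance $1$ in $G$ if they are non-adjacent in $H$, and at distance $2$ if they are adjacent in $H$. Hence
\[
D(G)=J-I+A(H).
\]
Let $c$ be the centre of the $P_3$, let $\{a,b\}$ be its two ends, and let $M$ be the set of the $2k-2$ vertices covered by the copies of $P_2$. The partition $\{\{c\},\{a,b\},M\}$ is equitable for $D(G)$, and a direct count gives the quotient matrix
\[
B=\begin{pmatrix}0&4&2k-2\\ 2&1&2k-2\\ 1&2&2k-2\end{pmatrix}.
\]
Its row sums are $n+1$, $n$ and $n$. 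Since $D(G)$ is nonnegative and irreducible, Lemma \ref{lem:quotient} gives $\rho_D(G)=\lambda_1(B)$. It therefore suffices to locate the largest root of the cubic $f(x)=\det(xI-B)$.

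Next I would show that $\lambda_1(B)$ is the only root of $f$ larger than $1$. Write $D(G)=J+(A(H)-I)$. Because $J$ has rank one, Weyl's interlacing inequality gives
\[
\lambda_2(D(G))\le\lambda_1(A(H)-I)=\sqrt2-1<1.
\]
Every eigenvalue of $B$ is an eigenvalue of $D(G)$, so the two smaller roots of $f$ are below $1$. Consequently, for $x>1$ the sign of $f(x)$ equals the sign of $x-\lambda_1(B)$, because the leading coefficient of $f$ is positive.

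It then remains to verify two inequalities:
\[
f\!\left(n+\tfrac1n\right)<0\qquad\text{and}\qquad f\!\left(n+\tfrac1n+\tfrac1{n^2}\right)>0.
\]
I would expand $f$ with $2k-2=n-3$, obtaining a cubic with coefficients polynomial in $n$. Then I would substitute $x=n+\varepsilon$ and expand $f(n+\varepsilon)$ as a polynomial in $\varepsilon$. The constant term should be of order $-n$, reflecting that $\lambda_1(B)$ sits just above $n$. The linear term should be of order $n^2\varepsilon$. Taking $\varepsilon=1/n$ should leave a negative remainder of lower order, and taking $\varepsilon=1/n+1/n^2$ should make the expression positive.

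I expect this final sign check to be the main obstacle. The two test points differ only by $1/n^2$, so the leading contributions cancel and the signs are decided by lower-order terms. I would multiply through by $n^3$ (respectively $n^6$) to reach integer polynomials in $n$, and check positivity or negativity for $n\ge5$ by grouping terms. If the polynomial sign is not obvious for small $n$, I would verify $n=5,7$ numerically and argue by domination of the leading term for larger $n$.
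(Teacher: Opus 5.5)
Your proposal is correct and is essentially the paper's proof. Your $B$ is exactly the paper's $Q_1$ (with $2k-2=n-3$), giving the same cubic $x^3-(n-2)x^2-(2n+2)x+n-3$, evaluated at the same two points. The only variation is how you certify that the root found is the largest: you use Weyl's inequality $\lambda_2(D(G))\le\sqrt2-1<1$, which is valid since the Perron root of the irreducible $B$ is simple, whereas the paper shows $\varphi_{Q_1}'>0$ on $[n,\infty)$.

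The sign check you expect to be the main obstacle is in fact easy. Expanding gives $f(n+\varepsilon)=\varepsilon^3+(2n+2)\varepsilon^2+(n^2+2n-2)\varepsilon-(n+3)$. Hence $f(n+\frac1n)=-1+\frac2{n^2}+\frac1{n^3}<0$ and $f(n+\frac1n+\frac1{n^2})=\frac2n+\frac4{n^2}+\frac7{n^3}+\frac5{n^4}+\frac3{n^5}+\frac1{n^6}>0$, so no numerical checks for small $n$ are needed.
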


\begin{proof}
	Let $F_k=\overline{P_3\cup(k-1)P_2}$, and write $P_3=v_1v_2v_3$ in $\overline{F_k}$. For the partition $\{\{v_2\},\{v_1,v_3\},V(F_k)\setminus\{v_1,v_2,v_3\}\}$, the distance matrix $D(F_k)$ has the equitable quotient matrix
	\[
	Q_1=
	\begin{pmatrix}
		0 & 4 & n-3\\
		2 & 1 & n-3\\
		1 & 2 & n-3
	\end{pmatrix}.
	\]
	By Lemma \ref{lem:quotient}, $\rho_D(F_k)=\lambda_1(Q_1)$. By a direct calculation, the characteristic polynomial of $Q_1$ is
	\[
	\varphi_{Q_1}(x)=x^3-(n-2)x^2-(2n+2)x+n-3.
	\]
	For $x\ge n$, we have
	\[
	\varphi_{Q_1}'(x)
	=(x-n)(3x+n+4)+n^2+2n-2>0.
	\]
	Thus $\varphi_{Q_1}(x)$ is strictly increasing on $[n,+\infty)$. Since $n\ge5$, we obtain
	\begin{align*}
		\varphi_{Q_1}\left(n+\frac1n\right)
		&=-1+\frac2{n^2}+\frac1{n^3}<0,\\
		\varphi_{Q_1}\left(n+\frac1n+\frac1{n^2}\right)
		&=\frac2n+\frac4{n^2}+\frac7{n^3}
		+\frac5{n^4}+\frac3{n^5}+\frac1{n^6}>0.
	\end{align*}
	Hence,
	\[
	n+\frac1n
	<\lambda_1(Q_1)=\rho_D(F_k)
	<n+\frac1n+\frac1{n^2}.
	\]
\end{proof}

\begin{lemma}
	\label{lem:integers}
	Let $d_1,\ldots,d_n$ be nonnegative integers satisfying $\sum_{i=1}^{n}d_i=n+1$. If $(d_1,\ldots,d_n)$ is not a permutation of $(2,1,\ldots,1)$, then $\sum_{i=1}^{n}d_i^2\ge n+5$.
\end{lemma}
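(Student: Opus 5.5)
The approach is to write each $d_i=1+\epsilon_i$ with integers $\epsilon_i\ge -1$ and $\sum_{i=1}^n\epsilon_i=1$. Expanding gives
\[
\sum_{i=1}^{n}d_i^2=\sum_{i=1}^{n}(1+2\epsilon_i+\epsilon_i^2)=n+2+\sum_{i=1}^{n}\epsilon_i^2 .
\]
So the claim is equivalent to $\sum_{i=1}^n\epsilon_i^2\ge 3$ whenever $(\epsilon_1,\ldots,\epsilon_n)$ is not a permutation of $(1,0,\ldots,0)$.

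Since the $\epsilon_i$ are integers, $\sum\epsilon_i^2$ is the number of nonzero entries plus $\sum_{|\epsilon_i|\ge2}(\epsilon_i^2-1)$. We then split according to the number $m$ of nonzero $\epsilon_i$.

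\begin{itemize}
\item If $m=0$, then $\sum\epsilon_i=0\neq1$, which is impossible.
\item If $m=1$, the single nonzero entry must equal $1$, and this is exactly the excluded permutation of $(2,1,\ldots,1)$.
\item If $m=2$, the two nonzero entries $a,b$ satisfy $a+b=1$. Both cannot be $\pm1$, since the sum of two numbers in $\{\pm1\}$ is even. Hence some $|\epsilon_i|\ge2$, and $\sum\epsilon_i^2\ge 4+1=5\ge3$.
\item If $m\ge3$, then $\sum\epsilon_i^2\ge m\ge3$.
\end{itemize}

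In every non-excluded case $\sum d_i^2\ge n+5$. The parity step in the case $m=2$ is the only slightly delicate point, and it is immediate.
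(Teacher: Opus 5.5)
Your proof is correct, but it takes a different route from the paper's. The paper argues in two steps. First, if every $d_i\ge1$, the condition $\sum d_i=n+1$ forces the excluded sequence, so some entry, say $d_n$, equals $0$. Second, it sums the integer inequality $d_i^2\ge 3d_i-2$, which follows from $(d_i-1)(d_i-2)\ge0$, over the remaining $n-1$ entries. This gives $\sum_{i=1}^{n-1}d_i^2\ge 3(n+1)-2(n-1)=n+5$ at once.

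You instead use the exact identity $\sum_{i=1}^{n}d_i^2=n+2+\sum_{i=1}^{n}(d_i-1)^2$. You then bound the deviation vector by a case split on its support, together with a parity check.

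What each route buys:
\begin{itemize}
\item The paper's argument is shorter and needs no cases.
\item Your decomposition shows exactly why $(2,1,\ldots,1)$ is the unique exception: it is the only sequence with $\sum\epsilon_i^2=1$.
\item You never use $\epsilon_i\ge-1$, so your argument proves the bound for arbitrary integers $d_i$. The paper's first step uses nonnegativity to conclude that the missing entry is exactly $0$.
\end{itemize}

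Your case analysis can also be collapsed into one line. Since $\epsilon_i^2\equiv\epsilon_i \pmod 2$, you get $\sum\epsilon_i^2\equiv\sum\epsilon_i=1 \pmod 2$. So $\sum\epsilon_i^2$ is odd: either it equals $1$, which is exactly the excluded case, or it is at least $3$.
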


\begin{proof}
	If $d_i\ge1$ for every $i$, then $\sum_{i=1}^{n}d_i=n+1$ implies that exactly one entry is $2$ and all others are $1$, contrary to the assumption. Thus at least one entry is $0$. Without loss of generality, assume that $d_n=0$.
	
	For every nonnegative integer $d_i$, we have $(d_i-1)(d_i-2)\ge0$, so $d_i^2\ge3d_i-2$. Hence,
	\[
	\sum_{i=1}^{n}d_i^2
	=\sum_{i=1}^{n-1}d_i^2
	\ge3\sum_{i=1}^{n-1}d_i-2(n-1)
	=3(n+1)-2(n-1)
	=n+5.
	\]
	This completes the proof.
\end{proof}

We now prove Theorem \ref{thm:all1} $(ii)$.

\begin{proof}[Proof of Theorem \ref{thm:all1} (ii)]
	Suppose that $\tau(G)\le k-1$. By Theorem \ref{thm:packing}, there exists a partition $\mathcal P=\{V_1,\ldots,V_p\}$ of $V(G)$ into $2\le p\le n=2k+1$ nonempty parts such that
	\begin{equation}
		\label{eq:odd-partition}
		e_G(\mathcal P)\le k(p-1)-1.
	\end{equation}
	Let $n_i=|V_i|$ for $1\le i\le p$. By Lemma \ref{lem:sum} and Inequality \eqref{eq:odd-partition}, we have
	\begin{align*}
		e(G)
		&\le\sum_{i=1}^{p}\binom{n_i}{2}+e_G(\mathcal P)\\
		&\le\binom{n-p+1}{2}+k(p-1)-1\\
		&=2k^2-1-\frac{(p-2)(2k-p+1)}2
		\le2k^2-1,
	\end{align*}
	where the last inequality follows from $2\le p\le2k+1$. Hence,
	\[
	e(\overline{G})
	=\binom{n}{2}-e(G)
	\ge\binom{2k+1}{2}-(2k^2-1)
	=k+1.
	\]
	
	We claim that $e(\overline{G})=k+1$. Suppose that $e(\overline{G})\ge k+2$. Since adjacent vertices have distance one and nonadjacent vertices have distance at least two, we have $W(G)\ge\binom{n}{2}+e(\overline{G})$. By Lemma \ref{lem:odd-bound}, we obtain
	\begin{align*}
		\rho_D(G)
		&\ge\frac{2W(G)}n
		\ge\frac2n\left(\binom{n}{2}+e(\overline{G})\right)\\
		&\ge n-1+\frac{2(k+2)}n
		=n+\frac3n\\
		&>n+\frac1n+\frac1{n^2}
		>\rho_D\left(\overline{P_3\cup(k-1)P_2}\right),
	\end{align*}
	where $n=2k+1\ge5$. This contradicts the assumption.
	Thus $e(\overline{G})=k+1$. Then
	\[
	\sum_{i=1}^{n}d_{\overline{G}}(v_i)
	=2e(\overline{G})
	=2(k+1)
	=n+1.
	\]
	Suppose that the degree sequence of $\overline{G}$ is not a permutation of $(2,1,\ldots,1)$. By Lemma \ref{lem:integers}, we have $\sum_{i=1}^{n}d_{\overline{G}}(v_i)^2\ge n+5$. By Lemma \ref{lem:square}, we obtain
	\begin{align*}
		\rho_D(G)^2
		&\ge\frac1n\sum_{i=1}^{n}
		\left(\sum_{j=1}^{n}d_G(v_i,v_j)\right)^2\\
		&\ge\frac1n\sum_{i=1}^{n}
		\left(n-1+d_{\overline{G}}(v_i)\right)^2\\
		&=(n-1)^2+\frac{2(n-1)(n+1)}n
		+\frac1n\sum_{i=1}^{n}d_{\overline{G}}(v_i)^2\\
		&\ge(n-1)^2+\frac{2(n-1)(n+1)}{n}+\frac{n+5}{n} 
		=n^2+2+\frac3n.
	\end{align*}
	Since $n\ge5$, we have
	\[
	n^2+2+\frac3n-\left(n+\frac1n+\frac1{n^2}\right)^2
	=\frac{n^3-n^2-2n-1}{n^4}>0.
	\]
	Together with Lemma \ref{lem:odd-bound}, we have $\rho_D(G)>\rho_D\left(\overline{P_3\cup(k-1)P_2}\right)$, which is contrary to the assumption. Hence the degree sequence of $\overline{G}$ is a permutation of $(2,1,\ldots,1)$. Let $u$ be the unique vertex of degree two in $\overline{G}$, and let $u_1,u_2$ be its neighbors. Since every other vertex has degree one, $u_1uu_2$ forms a component isomorphic to $P_3$, and the remaining $2k-2$ vertices form $k-1$ components isomorphic to $P_2$. Thus $\overline{G}\cong P_3\cup(k-1)P_2$, and hence $G\cong\overline{P_3\cup(k-1)P_2}$. This completes the proof.
\end{proof}

The following upper bound will be used in the proof of Theorem \ref{thm:all1} $(iii)$.

\begin{lemma}
	\label{lem:fan-bound}
	Let $k\ge2$ and $n\ge2k+2$ be integers. Then
	\begin{equation}
		\label{eq:fan-bound}
		\rho_D\left(K_{k-1}\vee(K_{n-k}\cup K_1)\right)
		<n+3-\frac{4(k+1)}{n}.
	\end{equation}
\end{lemma}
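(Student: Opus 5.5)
We compute $\rho_D$ of $G^*=K_{k-1}\vee(K_{n-k}\cup K_1)$ through an equitable quotient matrix, in the same way as in Lemma \ref{lem:odd-bound}, and then locate its largest root. Let $\omega$ be the $K_1$ vertex, let $X$ be the $k-1$ vertices of $K_{k-1}$, and let $Y$ be the $n-k$ vertices of $K_{n-k}$. The distances are as follows: $\omega$ is at distance $1$ from every vertex of $X$ and at distance $2$ from every vertex of $Y$; all pairs inside $X\cup Y$ are at distance $1$. Hence the partition $\{\{\omega\},X,Y\}$ is equitable, with quotient matrix
\[
Q=\begin{pmatrix}0 & k-1 & 2(n-k)\\ 1 & k-2 & n-k\\ 2 & k-1 & n-k-1\end{pmatrix}.
\]
By Lemma \ref{lem:quotient}, $\rho_D(G^*)=\lambda_1(Q)$.

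Next we expand $\varphi_Q(x)=\det(xI-Q)$ as a cubic. The plan is to show that $\varphi_Q(x)>0$ for every $x\ge x_0:=n+3-\frac{4(k+1)}{n}$. Since $\lambda_1(Q)$ is the largest real root of $\varphi_Q$, this gives $\lambda_1(Q)<x_0$.

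The cleanest route is to check two things: $\varphi_Q(x_0)>0$, and $x_0$ lies to the right of the larger critical point of $\varphi_Q$. For the second, it suffices to show $\varphi_Q'(x)>0$ for $x\ge x_0$, by writing $\varphi_Q'$ in a form like $(x-c)(3x+d)+(\text{positive})$ as in Lemma \ref{lem:odd-bound}. As a sanity check, the trace of $Q$ is $n-3$ and the other two eigenvalues are negative and small. So $\lambda_1$ is roughly $n-1+\frac{2(n-k)}{n}+O(1/n)$, which is indeed below $n+3-\frac{4(k+1)}{n}$ because $2(n-k)<4n-4(k+1)$ for $n\ge 2k+2$. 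This tells us the margin is of constant order, not tiny.

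The main obstacle is the sign verification of $\varphi_Q(x_0)$. After substituting $x_0$ and multiplying by $n^3$, we get a polynomial in the two variables $n$ and $k$. We must show it is positive on the region $k\ge2$, $n\ge2k+2$.

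To handle this, substitute $n=2k+2+t$ with $t\ge0$, or alternatively treat the expression as a polynomial in $n$ with $k$ fixed. Then group terms so that every coefficient is visibly nonnegative, or bound the expression below using $k\le (n-2)/2$. If the leading behaviour is around $c\,n^3$ with $c>0$ coming from the constant-order gap noted above, the lower-order terms can be absorbed. The boundary case $n=2k+2$ can then be checked separately.
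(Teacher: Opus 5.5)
Your route is the paper's: the same equitable quotient matrix (you order the parts as $\{\omega\},X,Y$, the paper as $X,Y,\{\omega\}$), $\rho_D=\lambda_1(Q)$ by Lemma~\ref{lem:quotient}, monotonicity of $\varphi_Q$ to the right of $n$, and positivity of $\varphi_Q$ at $x_0=n+3-\frac{4(k+1)}{n}$. The gap is that you stop at the only substantive step. You never compute $\varphi_Q$, and you never certify $\varphi_Q(x_0)>0$.

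The heuristic you offer in its place is also miscalculated. Write $D=(J-I)+E$, with $E$ the adjacency matrix of the star from $\omega$ to $Y$. Your estimate $n-1+\frac{2(n-k)}{n}$ is only the first-order perturbation term. Since $E$ has norm $\sqrt{n-k}$ and the spectral gap of $J-I$ is $n$, the second-order term is about $(1-\frac kn)^2$, which is of constant order. Indeed, substituting $x=n+c$ into $\varphi_Q$ and balancing the $n^2$-terms gives
\[
\lambda_1(Q)=n+2-\frac{4k}{n}+\frac{k^2}{n^2}+O\Bigl(\frac1n\Bigr).
\]
So the true margin is about $1-\frac{k^2}{n^2}$, not $2-\frac{2k}{n}$. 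For example, at $k=2$, $n=6$ one gets $\lambda_1\approx6.52$ against your estimate of $6.33$, with $x_0=7$. Your conclusion that the margin is of constant order survives, but not for the reason you give. Moreover, $k$ may be as large as $\frac{n-2}{2}$, so an expansion in $n$ with $k$ fixed, followed by ``absorbing lower-order terms,'' does not give a uniform bound. You need an exact certificate.

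The missing computation is short. One has
\[
\varphi_Q(x)=x^3-(n-3)x^2-(5n-3k-3)x+kn-k^2+4k-5n+1,
\]
and $\varphi_Q'(x)=(x-n)(3x+n+6)+n^2+n+3k+3>0$ for $x\ge n$. Also $x_0\ge n+1$, because $4(k+1)\le 2n$. With $m=k+1$,
\[
n^3\varphi_Q(x_0)=n^3(n-m)(n+m)+n^2(3n-5m)(5n-4m)+(2n-4m)(5n-4m)^2 .
\]
This is exactly the paper's identity $\varphi_Q(x_0)=n^2-(k+1)^2+(3n-5k-5)\bigl(5-\frac{4(k+1)}{n}\bigr)+\bigl(2-\frac{4(k+1)}{n}\bigr)\bigl(5-\frac{4(k+1)}{n}\bigr)^2$, multiplied by $n^3$. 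Every factor is nonnegative when $n\ge 2m$, and the first term is strictly positive.

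This also confirms that your suggested substitution works. With $n=2m+t$, $t\ge0$, the factors become $m+t$, $3m+t$, $m+3t$, $6m+5t$ and $2t$, all with nonnegative coefficients. So no separate check of the boundary case $n=2k+2$ is needed.
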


\begin{proof}
	For the partition $\{V(K_{k-1}),V(K_{n-k}),V(K_1)\}$, the distance matrix $D(K_{k-1}\vee(K_{n-k}\cup K_1))$ has the equitable quotient matrix
	\[
	Q_2=
	\begin{pmatrix}
		k-2 & n-k & 1\\
		k-1 & n-k-1 & 2\\
		k-1 & 2(n-k) & 0
	\end{pmatrix}.
	\]
	By Lemma \ref{lem:quotient}, $\rho_D(K_{k-1}\vee(K_{n-k}\cup K_1))=\lambda_1(Q_2)$. By a direct calculation, the characteristic polynomial of $Q_2$ is
	\[
	\varphi_{Q_2}(x)
	=x^3-(n-3)x^2-(5n-3k-3)x-k^2+kn+4k-5n+1.
	\]
	For $x\ge n$, we have
	\[
	\varphi_{Q_2}'(x)
	=(x-n)(3x+n+6)+n^2+n+3k+3>0.
	\]
	Thus $\varphi_{Q_2}(x)$ is strictly increasing on $[n,+\infty)$.
	
	By a direct calculation, we obtain
	\begin{align*}
		\varphi_{Q_2}\left(n+3-\frac{4(k+1)}{n}\right)
		&=n^2-(k+1)^2
		+(3n-5k-5)\left(5-\frac{4(k+1)}{n}\right)\\
		&\quad+\left(2-\frac{4(k+1)}{n}\right)
		\left(5-\frac{4(k+1)}{n}\right)^2
		>0,
	\end{align*}
	where the last inequality follows from $n\ge2k+2$.
	Since $n+3-\tfrac{4(k+1)}{n}\ge n+1>n$ and $\varphi_{Q_2}(x)$ is strictly increasing on $[n,+\infty)$, we have $\rho_D(K_{k-1}\vee(K_{n-k}\cup K_1))=\lambda_1(Q_2)<n+3-\tfrac{4(k+1)}{n}$.
\end{proof}

Let $\kappa'(G)$ denote the edge connectivity of a connected graph $G$. We will use the following result of Li, Fan and Wang in the proof of Theorem \ref{thm:all1} $(iii)$.

\begin{lemma}[{\cite{Li2014}}]
	\label{lem:connectivity}
	Let $G$ be a connected graph of order $n\ge3$ with edge connectivity $\kappa'(G)=s$, where $1\le s\le n-2$. Then $\rho_D(G)\ge\rho_D\left(K_s\vee(K_{n-s-1}\cup K_1)\right)$, with strict inequality unless $G\cong K_s\vee(K_{n-s-1}\cup K_1)$.
\end{lemma}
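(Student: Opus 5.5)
The plan is to reduce to a two-clique structure and then compare with the extremal graph through the Perron vector of $K_s\vee(K_{n-s-1}\cup K_1)$.

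\emph{Reduction to two cliques.} Let $G$ satisfy $\kappa'(G)=s$, and fix a minimum edge cut $E_G(U,\overline U)$ with $|E_G(U,\overline U)|=s$, where $\overline U=V(G)\setminus U$, $|U|=a\le|\overline U|=b$ and $a+b=n$. Add every missing edge inside $U$ and inside $\overline U$. The result $G'$ is $K_a\cup K_b$ joined by the same $s$ cut edges, so $G'$ is connected and $G$ is a spanning subgraph of $G'$. Repeated use of Lemma~\ref{lem:edge} gives $\rho_D(G)\ge\rho_D(G')$, with equality only if $G=G'$. It therefore suffices to treat graphs of the form ``$K_a\cup K_b$ plus $s$ edges between them''.

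\emph{The case $a=1$.} Here $G'$ is a vertex $w$ joined to $s$ vertices of $K_{n-1}$, so $G'\cong K_s\vee(K_{n-s-1}\cup K_1)$. This is the claimed extremal graph, and the equality case follows from the strictness in Lemma~\ref{lem:edge}.

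\emph{The case $a\ge2$.} This is the main step: show $\rho_D(G')>\rho_D(H^*)$, where $H^*=K_s\vee(K_{n-s-1}\cup K_1)$.

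1. Compute the Perron vector of $H^*$ from the equitable quotient of Lemma~\ref{lem:quotient}, exactly as for $Q_2$ with $k-1$ replaced by $s$. It takes three values:
   - $x_1$ on the $s$ join vertices,
   - $x_2$ on the $n-s-1$ clique vertices,
   - $x_3$ on the pendant-type vertex $w$.

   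The eigen-equations give the orderings $x_3>x_2>x_1$ (the far vertex has the largest row sum).
2. Transplant this vector onto $V(G')$ by a suitable bijection:
   - send $w$ to a vertex of $U$ not incident with the cut, if one exists, and otherwise to any vertex of $U$;
   - send the $s$ join vertices to endpoints of the cut edges;
   - send the remaining vertices to the clique vertices.
3. By the Rayleigh quotient, $\rho_D(G')\ge x^{\mathsf T}D(G')x$. Compare $x^{\mathsf T}D(G')x$ with $x^{\mathsf T}D(H^*)x=\rho_D(H^*)$ entry by entry.
   - Every pair at distance $1$ in $H^*$ has distance at least $1$ in $G'$.
   - The $n-s-1$ pairs $\{w,\text{clique vertex}\}$ at distance $2$ in $H^*$ must be matched against the at least $ab-s\ge 2(n-2)-s$ pairs at distance $\ge2$ in $G'$.

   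The difficulty is that these distance-$2$ pairs in $G'$ are not necessarily the images of those in $H^*$.

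\emph{Handling the mismatch.} I would first try a weighted counting argument. Each pair at distance $\ge2$ in $G'$ contributes at least $2x_ix_j\ge 2x_1^2$ extra, while the lost contribution is $(n-s-1)x_3x_2$. I would then use explicit bounds on the ratios $x_3/x_1$ and $x_2/x_1$ from the quotient equations; these ratios are bounded by roughly $2$, since $\rho\approx n$. The aim is to show that the surplus $\bigl(2(n-2)-s-(n-s-1)\bigr)=n-3$ extra distant pairs, together with the mass on distances $\ge3$ when they occur, outweighs the redistribution.

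If this bookkeeping fails for small $n$, the fallback is a direct comparison of Wiener-type bounds:
- use $\rho_D(G')\ge 2W(G')/n\ge n-1+2(ab-s)/n$ together with Lemma~\ref{lem:square};
- against this, bound $\rho_D(H^*)$ from above via its cubic characteristic polynomial, as in Lemma~\ref{lem:fan-bound}.

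Since $a\ge2$ forces $ab-s\ge n-2+(n-2-s)$, which exceeds $n-1-s$ by $n-3$ extra distant pairs, this gap should suffice. The borderline $n=3,4$ cases would be checked by hand.
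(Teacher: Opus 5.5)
The paper gives no proof of this lemma (it is quoted from Li, Fan and Wang), so your argument has to stand on its own. Your reduction to $G'$ (two cliques joined by the $s$ cut edges) and the case $a=1$ are correct. The case $a\ge2$, however, is the whole content of the lemma, and it is not proved: both routes are only sketched.

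The reason you give for why the Wiener comparison ``should suffice'' is not valid. Counting the $n-3$ extra distant pairs compares $2W(G')/n$ with $2W(H^*)/n$. But $\rho_D(H^*)$ lies well above $2W(H^*)/n$, because its Perron vector has weight about $2$ at $w$. For $s=1$, $2W(H^*)/n=n+1-\frac4n$, while the test vector equal to $2$ at $w$ and $1$ elsewhere gives $\rho_D(H^*)\ge n+2-\frac{16}{n+3}$. So everything hinges on an explicit upper bound for $\rho_D(H^*)$ valid for every $1\le s\le n-2$. Lemma~\ref{lem:fan-bound} (with $k=s+1$) is proved only for $n\ge2s+4$, so the unresolved cases are not just $n=3,4$.

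The same weighting issue breaks your transplant count as stated. Up to $n-s-1$ lost pairs, each costing $2x_3x_2\approx4x_1^2$ when $s$ is small, are set against $n-3$ gained pairs that are guaranteed only $2x_1^2$ each.

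The gap can be closed along your fallback. Since $2\le a\le n-2$ gives $ab\ge2(n-2)$, and $G'$ has exactly $ab-s$ nonadjacent pairs,
\[
\rho_D(G)\ge\rho_D(G')\ge\frac{2W(G')}{n}\ge n-1+\frac{2(ab-s)}{n}\ge n+3-\frac{2s+8}{n}=:y .
\]
The quotient matrix of $H^*$ has characteristic polynomial $\varphi(x)=x^3-(n-3)x^2-(5n-3s-6)x+sn-4n-s^2+2s+4$ (that of $Q_2$ with $k=s+1$), and $\varphi$ is increasing on $[n,\infty)$. Put $t=(2s+8)/n\in[10/n,\,2+4/n]$. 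A direct substitution gives
\[
\varphi(y)=\Bigl(1+t-\frac{t^2}{4}\Bigr)n^2+\frac{t^2-7t+14}{2}\,n+16-39t+12t^2-t^3 .
\]
For $n\ge4$ we have $t\le3$, hence $y\ge n$. On this range the three coefficients are at least $1$, $\frac78$ and $-12-6\sqrt3$, so $\varphi(y)>0$ for $n\ge5$. For $n=4$ one checks directly that $\varphi(y)=\frac{115}{8}$ when $s=1$ and $\varphi(y)=12$ when $s=2$. Thus $\rho_D(H^*)<y\le\rho_D(G)$ whenever $a\ge2$, and $n=3$ forces $a=1$.

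With this computation supplied, your argument is a complete proof, and Lemma~\ref{lem:square} is not needed.
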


We now prove Theorem \ref{thm:all1} $(iii)$.

\begin{proof}[Proof of Theorem \ref{thm:all1} (iii)]
	Suppose that $\tau(G)\le k-1$. By Theorem \ref{thm:packing}, there exists a partition $\mathcal P=\{V_1,\ldots,V_p\}$ of $V(G)$ into $2\le p\le n$ nonempty parts such that
	\begin{equation}
		\label{eq:small-partition}
		e_G(\mathcal P)\le k(p-1)-1.
	\end{equation}
	Let $n_i=|V_i|$ for $1\le i\le p$, then $\sum_{i=1}^{p}n_i=n$. We consider the following two cases.
	
	\noindent \textbf{Case 1.} $p\ge3$.
	
	Since adjacent vertices have distance one and nonadjacent vertices have distance at least two, we have $W(G)\ge\binom{n}{2}+e(\overline{G})= n(n-1)-e(G)$. By Lemma \ref{lem:sum} and Inequality \eqref{eq:small-partition}, we obtain
	\begin{align*}
		W(G)
		&\ge n(n-1)-\sum_{i=1}^{p}\binom{n_i}{2}-e_G(\mathcal P)\\
		&\ge n(n-1)-\binom{n-p+1}{2}-k(p-1)+1\\
		&=\frac{n(n+3)}2-2(k+1)
		+\frac{(p-3)(2n-2k-p-2)}2\\
		&\ge\frac{n(n+3)}2-2(k+1),
	\end{align*}
	where the last inequality follows from $p\ge3$ and $2n-2k-p-2\ge n-2k-2\ge0$. By Lemma \ref{lem:fan-bound}, we have
	\[
	\rho_D(G)\ge\frac{2W(G)}n
	\ge n+3-\frac{4(k+1)}n
	>\rho_D\left(K_{k-1}\vee(K_{n-k}\cup K_1)\right),
	\]
	which is contrary to the assumption.
	
	\noindent \textbf{Case 2.} $p=2$.
	
	Since $G$ is connected, Inequality \eqref{eq:small-partition} gives $1\le e_G(V_1,V_2)\le k-1$. Let $s=\kappa'(G)$. Then $1\le s\le k-1$. By Lemmas \ref{lem:connectivity} and \ref{lem:edge}, we have
	\[
		\rho_D(G)
		\ge\rho_D\left(K_s\vee(K_{n-s-1}\cup K_1)\right)\ge\rho_D\left(K_{k-1}\vee(K_{n-k}\cup K_1)\right).
	\]
	By the assumption of Theorem \ref{thm:all1} $(iii)$, we have $\rho_D(G)=\rho_D\left(K_{k-1}\vee(K_{n-k}\cup K_1)\right)$ and $s=k-1$, and Lemma \ref{lem:connectivity} gives $G\cong K_{k-1}\vee(K_{n-k}\cup K_1)$. This completes the proof.
\end{proof}

\section{Proof of Theorem \ref{thm:all2}}
\label{sec:degree}

The following upper bound will be used in the proof of Theorem \ref{thm:all2} $(i)$.

\begin{lemma}
	\label{lem:natural-bound}
	Let $k\ge2$ and $n\ge2k+3$ be integers. Then
	\begin{equation}
		\label{eq:natural-bound}
		\rho_D(F_{n,k})<n+5-\frac{6k+10}{n}.
	\end{equation}
\end{lemma}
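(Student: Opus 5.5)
We follow the pattern of Lemma \ref{lem:fan-bound}: compute $\rho_D(F_{n,k})$ as the largest eigenvalue of an equitable quotient matrix, then locate it with the characteristic polynomial. In $F_{n,k}$ the vertex classes $\{u,v\}$, $S\cup T$ and $R=V(K_{n-2})\setminus(S\cup T)$ (with $|R|=n-2k$) give an equitable partition of $D(F_{n,k})$. By the symmetry swapping $u\leftrightarrow v$ and $S\leftrightarrow T$, we may use the classes $U=\{u,v\}$, $W=S\cup T$, $R$. We compute the distances as follows.

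\begin{itemize}
\item $d(u,v)=1$.
\item From $u$: distance $1$ to the $k-1$ vertices of $S$; distance $2$ to the $k-1$ vertices of $T$ (via $v$ or via $S$) and to the vertices of $R$.
\item From $s\in S$: distance $1$ to $u$ and $2$ to $v$; distance $1$ to the other $2k-3$ vertices of $W$ and to all of $R$.
\item From $r\in R$: distance $2$ to both $u$ and $v$; distance $1$ to the $2k-2$ vertices of $W$ and to the other $n-2k-1$ vertices of $R$.
\end{itemize}

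Hence the quotient matrix is
\[
Q_3=\begin{pmatrix}1 & (k-1)+2(k-1) & 2(n-2k)\\ 3 & 2k-3 & n-2k\\ 4 & 2k-2 & n-2k-1\end{pmatrix}
=\begin{pmatrix}1 & 3k-3 & 2n-4k\\ 3 & 2k-3 & n-2k\\ 4 & 2k-2 & n-2k-1\end{pmatrix}.
\]
As a check, the row sums should equal the transmissions: $2n-k-2$ for $u$, $n+1$ for $s$, and $n+3$ for $r$. By Lemma \ref{lem:quotient}, $\rho_D(F_{n,k})=\lambda_1(Q_3)$.

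Next we compute $\varphi_{Q_3}(x)=\det(xI-Q_3)$, a monic cubic whose $x^2$ coefficient is $-(n-3)$. We then show that $\varphi_{Q_3}$ is strictly increasing on $[n,\infty)$ by writing $\varphi_{Q_3}'(x)$ in the form $(x-n)(3x+\alpha)+\beta$ with $\alpha,\beta>0$, as was done for $Q_2$. We also check that $\lambda_1(Q_3)\ge n$; this is clear because the minimum row sum is $n+1$, so in fact $\lambda_1\ge n+1$. It then suffices to prove $\varphi_{Q_3}(x_0)>0$ at $x_0=n+5-\frac{6k+10}{n}$. Note that $x_0\ge n+1$ because $n\ge 2k+3$ gives $\frac{6k+10}{n}<4$.

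The main obstacle is this sign verification. Set $t=5-\frac{6k+10}{n}$ and substitute $x=n+t$. We expand $\varphi_{Q_3}(n+t)$ as a polynomial in $n$ with coefficients involving $t$ and $k$, grouping terms exactly as in Lemma \ref{lem:fan-bound}: a leading part such as $n^2-(\text{something in }k)^2$, plus products of factors that are nonnegative when $n\ge 2k+3$ (for example $2-\frac{\cdot}{n}$ and $t>1$). If a clean grouping does not appear, we instead treat the expression as a function of $n$ on $[2k+3,\infty)$. We would verify positivity at $n=2k+3$ directly, which gives a polynomial in $k$ to check for $k\ge2$, and then show that the derivative in $n$ is positive. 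This yields $\rho_D(F_{n,k})=\lambda_1(Q_3)<x_0$.
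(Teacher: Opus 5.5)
Your route is the paper's: the partition $\{\{u,v\},S\cup T,R\}$, the same quotient matrix $Q_3$ (which you have right), monotonicity of $\varphi_{Q_3}$ on $[n,\infty)$, and a sign check at $x_0=n+5-\frac{6k+10}{n}$. However, the proposal stops exactly where the content of the lemma lies. You never write down $\varphi_{Q_3}(x)=x^3-(n-3)x^2-(8n-7k-8)x+nk-8n-2k^2+9k+6$ or $\varphi_{Q_3}'(x)=(x-n)(3x+n+6)+n^2-2n+7k+8$, and $\varphi_{Q_3}(x_0)>0$ is announced rather than proved.

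The template you borrow from Lemma~\ref{lem:fan-bound} also does not transfer. Write $x_0=n+t$, so $x_0=n+1+s$ with $s=4-\frac{6k+10}{n}$. There is no dominant $n^2$ part: the coefficient of $n^2$ in $\varphi_{Q_3}(n+t)$ is $t-5=-\frac{6k+10}{n}$. The direct expansion is
\[
\varphi_{Q_3}(x_0)=(2s^2+2s+2k-10)\,n+s^3+6s^2+(7k+17)s-2k^2+16k+18,
\]
and its coefficient of $n$ is negative, e.g., for $k=2$, $n=7$. The missing step is to use the relation $ns=4n-6k-10$ that defines $s$. This yields
\[
\varphi_{Q_3}(x_0)=(2k+30)n+s^3+6s^2-(5k+3)s-2k^2-44k-82.
\]
Since $0<s<4$ when $n\ge 2k+3$, this exceeds $(2k+30)n-2k^2-64k-94\ge(2k+30)(2k+3)-2k^2-64k-94=2k^2+2k-4>0$. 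The paper uses the same relation to rewrite $\varphi_{Q_3}(x_0)$ as a cubic in $s$ with the visibly positive coefficients $1$, $\frac{5n+36}{6}$, $\frac{(3k+31)n+48}{9}$, $\frac{(6k+26)n-128}{9}$. Your fallback (check $n=2k+3$, then monotonicity in $n$) would also work, but it has to be carried out.

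Separately, your row-sum ``check'' is wrong. The rows of your $Q_3$ sum to $2n-k-2$, $n$ and $n+1$, not $n+1$ and $n+3$. A vertex of $S$ is adjacent to the other $n-3$ clique vertices and to $u$, and at distance $2$ from $v$. A vertex of $R$ has transmission $(n-3)+2+2$. Hence the minimum row sum is $n$, and your inference $\lambda_1\ge n+1$ is unsupported. This is harmless, since the argument only needs $x_0>n$ plus monotonicity. Still, the transmissions you listed would give $\rho_D(F_{n,k})\ge 2W/n=n+5-\frac{6k+6}{n}>x_0$, contradicting the very lemma you are proving. The correct value is $2W(F_{n,k})/n=n+3-\frac{4k+4}{n}$.
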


\begin{proof}
	Let $u,v$ be the two vertices of $F_{n,k}$ outside the clique $K_{n-2}$. Let $A$ be the set of vertices in $K_{n-2}$ adjacent to $u$ or $v$, and let $B$ consist of the remaining vertices of $K_{n-2}$. Then $|A|=2k-2$ and $|B|=n-2k$. For the partition $\{\{u,v\},A,B\}$, the distance matrix $D(F_{n,k})$ has the equitable quotient matrix
	\[
	Q_3=
	\begin{pmatrix}
		1 & 3(k-1) & 2(n-2k)\\
		3 & 2k-3 & n-2k\\
		4 & 2k-2 & n-2k-1
	\end{pmatrix}.
	\]
	By Lemma \ref{lem:quotient}, $\rho_D(F_{n,k})=\lambda_1(Q_3)$. By a direct calculation, the characteristic polynomial of $Q_3$ is
	\[
	\varphi_{Q_3}(x)
	=x^3-(n-3)x^2-(8n-7k-8)x+nk-8n-2k^2+9k+6.
	\]
	For $x\ge n$, we have
	\[
	\varphi_{Q_3}'(x)
	=(x-n)(3x+n+6)+n^2-2n+7k+8>0.
	\]
	Thus $\varphi_{Q_3}(x)$ is strictly increasing on $[n,+\infty)$.
	
	Since $n\ge2k+3\ge7$, by a direct calculation, we obtain
	\begin{align*}
		\varphi_{Q_3}\left(n+5-\frac{6k+10}{n}\right)
		&=\left(4-\frac{6k+10}{n}\right)^3
		+\frac{5n+36}{6}\left(4-\frac{6k+10}{n}\right)^2\\
		&\quad+\frac{(3k+31)n+48}{9}
		\left(4-\frac{6k+10}{n}\right)
		+\frac{(6k+26)n-128}{9}\\
		&>0.
	\end{align*}
	Since $n+5-\tfrac{6k+10}{n}>n$, it follows that $\varphi_{Q_3}(x)>0$ for all $x\ge n+5-\tfrac{6k+10}{n}$. Therefore, $\rho_D(F_{n,k})=\lambda_1(Q_3)<n+5-\tfrac{6k+10}{n}$. This completes the proof.
\end{proof}

We now prove Theorem \ref{thm:all2} $(i)$.

\begin{proof}[Proof of Theorem \ref{thm:all2} (i)]
	Suppose that $\tau(G)\le k-1$. By Theorem \ref{thm:packing}, there exists a partition $\mathcal P=\{V_1,\ldots,V_p\}$ of $V(G)$ into $2\le p\le n$ nonempty parts such that
	\begin{equation}
		\label{eq:natural-partition}
		e_G(\mathcal P)\le k(p-1)-1.
	\end{equation}
	Let $n_i=|V_i|$ for $1\le i\le p$, then $\sum_{i=1}^{p}n_i=n$. Since adjacent vertices have distance one and nonadjacent vertices have distance at least two, we have $W(G)\ge\binom{n}{2}+e(\overline{G})= n(n-1)-e(G)$. We consider the following three cases.
	
	\noindent \textbf{Case 1.} $p\ge4$.
	
	By Lemma \ref{lem:sum} and Inequality \eqref{eq:natural-partition}, we obtain
	\begin{align*}
		W(G)
		&\ge n(n-1)-e(G) \\
		&\ge n(n-1)-\sum_{i=1}^{p}\binom{n_i}{2}-e_G(\mathcal P)\\
		&\ge n(n-1)-\binom{n-p+1}{2}-k(p-1)+1\\
		&=\frac{n(n+5)}2-3k-5
		+\frac{(p-4)(2n-2k-p-3)}2\\
		&\ge\frac{n(n+5)}2-3k-5,
	\end{align*}
	where the last inequality follows from $p\ge4$ and $2n-2k-p-3\ge n-2k-3\ge0$. By Lemma \ref{lem:natural-bound}, we have
	\[
	\rho_D(G)\ge\frac{2W(G)}n
	\ge n+5-\frac{6k+10}{n}
	>\rho_D(F_{n,k}),
	\]
	which is contrary to the assumption.
	
	\noindent \textbf{Case 2.} $p=2$.
	
	By Inequality \eqref{eq:natural-partition}, we have $e_G(V_1,V_2)\le k-1\le \delta -1$, Lemma \ref{lem:cut} gives $n_1,n_2\ge \delta +1\ge k+1$. By Lemma \ref{lem:sum} and Inequality \eqref{eq:natural-partition}, we obtain
	\begin{align*}
		W(G)
		&\ge n(n-1)-\sum_{i=1}^{2}\binom{n_i}{2}-e_G(\mathcal P)\\
		&\ge n(n-1)-\binom{n-k-1}{2}-\binom{k+1}{2}-k+1\\
		&=\frac{n(n+5)}2-3k-5
		+(k-2)(n-2k-3)+k^2-k-1\\
		&>\frac{n(n+5)}2-3k-5,
	\end{align*}
	where the last inequality follows from $n\ge2k+3$ and $k\ge2$. By Lemma \ref{lem:natural-bound}, we have
	\[
	\rho_D(G)\ge\frac{2W(G)}n
	>n+5-\frac{6k+10}{n}
	>\rho_D(F_{n,k}),
	\]
	which is contrary to the assumption.
	
	\noindent \textbf{Case 3.} $p=3$.
	
	Without loss of generality, assume that $n_1\le n_2\le n_3$. Suppose that $n_2\ge2$. Then $n_1\ge1$ and $n_2,n_3\ge2$. By Lemma \ref{lem:sum} and Inequality \eqref{eq:natural-partition}, we obtain
	\begin{align*}
		W(G)
		&\ge n(n-1)-\sum_{i=1}^{3}\binom{n_i}{2}-e_G(\mathcal P)\\
		&\ge n(n-1)-\binom{n-3}{2}-1-(2k-1)\\
		&=\frac{n(n+5)}2-2k-6\\
		&>\frac{n(n+5)}2-3k-5.
	\end{align*}
	By Lemma \ref{lem:natural-bound}, we have
	\[
	\rho_D(G)\ge\frac{2W(G)}n
	>n+5-\frac{6k+10}{n}
	>\rho_D(F_{n,k}),
	\]
	which is contrary to the assumption. Thus $n_1=n_2=1$.
	
	Let $\mathcal P=\{\{u\},\{v\},R\}$, where $|R|=n-2$. If $uv\notin E(G)$, then $e_G(\mathcal P)=d_G(u)+d_G(v)\ge2k$, which is contrary to Inequality \eqref{eq:natural-partition}. Hence $uv\in E(G)$, and
	\[
	2k\le d_G(u)+d_G(v)=e_G(\mathcal P)+1\le2k.
	\]
	Hence $d_G(u)=d_G(v)=k$ and $e_G(\mathcal P)=2k-1$.
	
	Let $H$ be obtained from $G$ by adding all missing edges within $R$. By Lemma \ref{lem:edge}, we have $\rho_D(G)\ge\rho_D(H)$, with strict inequality unless $G\cong H$. Let $S=N_H(u)\cap R$ and $T=N_H(v)\cap R$. Then $|S|=|T|=k-1$.
	
	Suppose that $S\cap T\ne\emptyset$. Choose $w\in S\cap T$. Since $|R\setminus(S\cup T)|
	\ge n-2-2(k-1)=n-2k>0$, there exists $z\in R\setminus(S\cup T)$. Let $H'=H-uw+uz$, then $d_{H'}(u)=d_{H'}(v)=k$ and $|N_{H'}(u)\cap N_{H'}(v)|=|S\cap T|-1$. The only distances that change are
	$d_H(u,w)=1$, $d_{H'}(u,w)=2$ and
	$d_H(u,z)=2$, $d_{H'}(u,z)=1$.
	
	Let $\mathbf x=(x_t)_{t\in V(H')}$ be a positive unit eigenvector of $D(H')$ corresponding to $\rho_D(H')$. 
	
	We now prove that $x_w=x_z$. Let $\rho=\rho_D(H')$ and $S'=(S\setminus\{w\})\cup\{z\}$. Then $N_{H'}(u)\cap R=S'$,$N_{H'}(v)\cap R=T$. For $t\in S'\setminus(T\cup\{z\})$, the eigenvector equations at $t$ and $z$ in $H'$ are
	\[
	\rho x_t=x_u+2x_v+x_z+\sum_{r\in R\setminus\{t,z\}}x_r,
	\qquad
	\rho x_z=x_u+2x_v+x_t+\sum_{r\in R\setminus\{t,z\}}x_r.
	\]
	Subtracting these two equations, we obtain $(\rho+1)(x_t-x_z)=0$. Hence $x_t=x_z$ for every $t\in S'\setminus T$. Similarly, $x_t=x_w$ for every $t\in T\setminus S'$.
	
	Since $|S'|=|T|=k-1$, we have $|S'\setminus T|=|T\setminus S'|=k-1-|S'\cap T|$. Comparing the eigenvector equations at $w,z$ and at $u,v$, respectively, we obtain
	\begin{align*}
		(\rho+1)(x_w-x_z)&=x_u-x_v,\\
		(\rho+1)(x_u-x_v)
		&=\sum_{t\in T\setminus S'}x_t-\sum_{t\in S'\setminus T}x_t=(k-1-|S'\cap T|)(x_w-x_z).
	\end{align*}
	Hence,
	\[
	\bigl((\rho+1)^2-k+1+|S'\cap T|\bigr)(x_w-x_z)=0.
	\]
	Since $\rho\ge \frac{2W(H')}{n}\ge\frac2n \binom n2 =n-1$. Therefore,
	\[
	\bigl((\rho+1)^2-k+1+|S'\cap T|\bigr)
	\ge n^2-k+1>0,
	\]
	and hence $x_w=x_z$.
	
	By the Rayleigh quotient principle, we have
	\[
	\rho_D(H)-\rho_D(H')\ge\mathbf x^{\mathsf T}\bigl(D(H)-D(H')\bigr)\mathbf x=2x_u(x_z-x_w)=0.
	\]
	If $\rho_D(H)=\rho_D(H')$, then $\mathbf x$ is also an eigenvector of $D(H)$ corresponding to $\rho_D(H)$. Comparing the eigenvector equations at $z$, and noting that $d_H(z,u)=2$ and $d_{H'}(z,u)=1$, while $d_H(z,a)=d_{H'}(z,a)$ for every $a\ne u,z$, we obtain
	\begin{align*}
		(\rho_D(H)-\rho_D(H'))x_z
		&=\sum_{a\in V(H)}\bigl(d_H(z,a)-d_{H'}(z,a)\bigr)x_a\\
		&=\bigl(d_H(z,u)-d_{H'}(z,u)\bigr)x_u\\
		&=x_u>0,
	\end{align*}
	a contradiction. Thus $\rho_D(H)>\rho_D(H')$.
	
	Repeating the above switching operation until the neighborhoods of $u$ and $v$ in $R$ are disjoint, we obtain a graph isomorphic to $F_{n,k}$. Since the distance spectral radius strictly decreases at each step, it follows that
	\[
	\rho_D(G)\ge \rho_D(H)>\rho_D(F_{n,k}),
	\]
	which contradicts the assumption. Hence $S\cap T=\emptyset$ and $H\cong F_{n,k}$. By Lemma \ref{lem:edge} and the assumption, we have $G\cong F_{n,k}$. This completes the proof.
\end{proof}

We next prove Theorem \ref{thm:all2} $(ii)$.

\begin{proof}[Proof of Theorem \ref{thm:all2} (ii)]
	Suppose that $\tau(G)\le k-1$. By Theorem \ref{thm:packing}, there exists a partition $\mathcal P=\{V_1,\ldots,V_p\}$ of $V(G)$ into $2\le p\le n$ nonempty parts such that
	\begin{equation}
		\label{eq:critical-partition}
		e_G(\mathcal P)\le k(p-1)-1.
	\end{equation}
	Let $n_i=|V_i|$ for $1\le i\le p$, so that $\sum_{i=1}^{p}n_i=n$. 
	
	Suppose that $p=2$. By Inequality \eqref{eq:critical-partition}, we have $e_G(V_1,V_2)\le k-1\le\delta-1$. By Lemma \ref{lem:cut}, $n_1,n_2\ge\delta+1\ge k+1$. Since $n=2k+2$, we obtain $n_1=n_2=k+1$. Hence,
	\begin{align*}
		W(G)
		&\ge n(n-1)-e(G)\\
		&\ge n(n-1)-2\binom{k+1}{2}-k+1\\
		&=\frac{n(n+1)}2+k(k-1).
	\end{align*}
	It follows that
	\[
	\rho_D(G)\ge\frac{2W(G)}n
	\ge n+1+\frac{2k(k-1)}n>n+1,
	\]
	which is contrary to the assumption. Thus $p\ge3$.
	
	By Lemma \ref{lem:sum} and Inequality \eqref{eq:critical-partition}, we obtain
	\begin{align*}
		e(G)
		&\le\sum_{i=1}^{p}\binom{n_i}{2}+e_G(\mathcal P)\\
		&\le\binom{n-p+1}{2}+k(p-1)-1\\
		&=\frac{n(n-3)}2-\frac{(p-3)(n-p)}2\\
		&\le\frac{n(n-3)}2,
	\end{align*}
	where we use $n=2k+2$ and $3\le p\le n$. 
	Together with the assumption, we obtain
	\begin{equation}
		\label{eq:critical-equality}
		n+1\ge\rho_D(G)\ge\frac{2W(G)}n
		\ge2(n-1)-\frac{2e(G)}n
		\ge n+1.
	\end{equation}
	Thus all inequalities in \eqref{eq:critical-equality} are equalities, and $W(G)=n(n-1)-e(G)$ implies that $d_G(u,v)=2$ for every $uv\in E(\overline G)$. Since $\frac{\mathbf1^{\mathsf T}D(G)\mathbf1} {\mathbf1^{\mathsf T}\mathbf1} =\frac{2W(G)}n =\rho_D(G)$, we have $D(G)\mathbf1=(n+1)\mathbf1$. Hence, for every $v\in V(G)$,
	\[
		n+1=\sum_{w\in V(G)}d_G(v,w)=d_G(v)+2\bigl(n-1-d_G(v)\bigr)
		=n-1+d_{\overline G}(v).
	\]
	Hence $d_{\overline G}(v)=2$ for every $v\in V(G)$. Thus $\overline G$ is a disjoint union of cycles, and $G\cong H$ for some $H\in\mathcal F_n$. This completes the proof.
\end{proof}

\section{Proof of Theorem \ref{thm:strong}}

In this section, we prove Theorem \ref{thm:strong}. We first give a lemma.

\begin{lemma}
	\label{lem:matching}
	Let $k\ge2$, $\delta\ge k$ and $n\ge2\delta+2$ be integers. If $H$ is obtained from $K_{\delta+1}\cup K_{n-\delta-1}$ by adding $k-1$ edges between the two complete graphs, then $\rho_D(H)\ge\rho_D(M_{n,\delta+1}^{k-1})$, with equality if and only if the added edges form a matching.
\end{lemma}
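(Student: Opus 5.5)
The plan is a switching argument like the one in Case 3 of the proof of Theorem \ref{thm:all2} (i). Write $X=V(K_{\delta+1})$ and $Y=V(K_{n-\delta-1})$, and let $E'$ be the set of $k-1$ added edges. Since $k-1\le\delta-1<\min\{|X|,|Y|\}$, a matching of size $k-1$ between $X$ and $Y$ exists. Any two such matchings give isomorphic graphs, so $M_{n,\delta+1}^{k-1}$ is well defined.

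We first record the distances in $H$. Within $X$ and within $Y$ every distance is $1$. For $x\in X$ and $y\in Y$, $d_H(x,y)=1$ if $xy\in E'$, $2$ if exactly one of $x,y$ is incident to $E'$, and $3$ otherwise. If $E'$ is not a matching, some vertex has $E'$-degree at least $2$. Say $x_0\in X$ is incident to $x_0y_1,x_0y_2\in E'$ (the case of a vertex in $Y$ is symmetric).

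\textbf{Switching step.} Since $|E'|=k-1<\delta+1$, some $z\in X$ is not incident to $E'$. Let $H'=H-x_0y_2+zy_2$. Then $H'$ has fewer vertices of $E'$-degree $\ge2$, or the same number with smaller total excess. We want $\rho_D(H')<\rho_D(H)$.

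Take the Perron vector $\mathbf x$ of $D(H')$ and use $\rho_D(H)-\rho_D(H')\ge \mathbf x^{\mathsf T}(D(H)-D(H'))\mathbf x$. The distance changes involve only $z$, $y_2$, $x_0$ and vertices of $Y$. The pair $z,y_2$ goes from distance $2$ to $1$. For $y\in Y$ not incident to $E'(H')$ other than through the edges already counted, $d(z,y)$ drops from $3$ to $2$. The pair $x_0,y_2$ goes from $1$ to $2$. Distances $d(x_0,y)$ for $y\in Y$ stay at most $2$, since $x_0$ is still incident to $y_1$. So the difference equals $2\sum_{y}c_y x_zx_y-2x_{x_0}x_{y_2}$ with nonnegative coefficients $c_y$, one of which corresponds to the drop at $y_2$ or at the free vertices of $Y$.

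The two facts needed are comparisons of Perron entries: $x_z\ge x_{x_0}$, and that the gain terms dominate. From the eigenvector equations at $z$ and $x_0$ in $H'$,
\[
(\rho+1)(x_z-x_{x_0})=\sum_{y\in Y}\bigl(d_{H'}(z,y)-d_{H'}(x_0,y)\bigr)x_y .
\]
Here $z$ has $E'$-degree $1$ in $H'$, while $x_0$ still has degree $\ge1$. This comparison needs care, and it is the step I expect to be the \emph{main obstacle}. If it goes awkwardly, I would instead compare $H$ and $H'$ through a third graph, or switch by moving only the $X$-end, so that the changed distances are exactly $d(x_0,y_2)\colon 1\to 2$ and $d(z,y_2)\colon 2\to1$ together with some $3\to2$ drops at $z$. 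Those drops only help. The question then reduces to $x_z\ge x_{x_0}$ in $H'$, or symmetrically to showing the $3\to 2$ contributions outweigh a possible deficit.

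For the equality/strictness part, I would argue as in Theorem \ref{thm:all2} (i). If the Rayleigh bound were an equality, $\mathbf x$ would also be an eigenvector of $D(H)$. Comparing the eigenvector equations at $z$ then gives a contradiction, because $d_H(z,y_2)-d_{H'}(z,y_2)=1$ and $x_{y_2}>0$.

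Iterating the switching strictly decreases $\rho_D$ and ends at a matching, that is, at $M_{n,\delta+1}^{k-1}$. This gives $\rho_D(H)\ge\rho_D(M_{n,\delta+1}^{k-1})$, with equality only when $E'$ is already a matching. The converse, that every matching gives equality, follows from the isomorphism noted at the start.
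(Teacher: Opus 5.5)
\textbf{There is a genuine gap.} Your switching strategy is viable in principle. However, the inequality that carries it, $\mathbf x^{\mathsf T}(D(H)-D(H'))\mathbf x>0$, is never proved, and the reduction you propose as the main route is false in general.

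Let $Y_0$ be the set of vertices of $Y$ not incident to $E'$; this set is the same in $H$ and $H'$. Then the difference is exactly
\[
2(x_z-x_{x_0})x_{y_2}+2x_z\sum_{y\in Y_0}x_y .
\]
Your own eigenvector equation gives $(\rho+1)(x_z-x_{x_0})=\sum_{y\in N_{H'}(x_0)\cap Y}x_y-x_{y_2}$, so $x_z\ge x_{x_0}$ is not automatic. It fails, for example, for $E'=\{x_0y_1,x_0y_2,x_1y_1,x_2y_1\}$ with $k=5$. In $H'$ one gets
\[
(\rho+1)(x_z-x_{x_0})=x_{y_1}-x_{y_2},\qquad (\rho+1)(x_{y_1}-x_{y_2})=x_z-x_{x_0}-x_{x_1}-x_{x_2}<0 .
\]
The last sign holds because the Perron entries of a matrix with off-diagonal entries in $\{1,2,3\}$ differ by a factor less than $3$. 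So the deficit must be paid for by the $3\to2$ drops at $Y_0$, and that estimate is exactly what you leave open. Your strictness argument also presupposes the non-strict inequality, so it inherits the same gap.

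There is also a smaller slip in your distance rule. The correct rule is $d_H(x,y)=2$ iff $xy\notin E'$ and \emph{at least} one of $x,y$ is incident to $E'$, not exactly one. Under your stated rule, the pairs $\{z,y\}$ with $y\ne y_2$ incident would go from $2$ to $3$, which would spoil the sign pattern you claim.

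\textbf{How to close it in your framework.} Three facts suffice:
\begin{enumerate}[(a)]
\item From the equation at $z$, $\rho x_z\ge x_{y_2}$.
\item For $y\in Y_0$, $(\rho+1)(x_y-x_{y_2})=\sum_{x\in X}\bigl(d_{H'}(y,x)-d_{H'}(y_2,x)\bigr)x_x>0$, since every term is nonnegative and $x=z$ contributes $x_z$. As $Y_0\ne\emptyset$, this gives $\sum_{y\in Y_0}x_y>x_{y_2}$.
\item From the equation above, $x_z-x_{x_0}\ge -x_{y_2}/(\rho+1)$.
\end{enumerate}
Together these give a difference greater than $2x_{y_2}^2\bigl(\frac1\rho-\frac1{\rho+1}\bigr)>0$, and this yields strictness directly.

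\textbf{How the paper avoids this.} The paper makes no entrywise Perron comparisons at all. It picks $(k-1)$-sets $A_1\subseteq X$ and $B_1\subseteq Y$ containing all endpoints of $E'$. It then takes the Perron vector of $D(M_{n,\delta+1}^{k-1})$, with the matching placed between $A_1$ and $B_1$. By the equitable partition $\{A_1,X\setminus A_1,B_1,Y\setminus B_1\}$, this vector is constant on the four blocks, and the paper compares $H$ with $M_{n,\delta+1}^{k-1}$ in a single Rayleigh step. Because the test vector is blockwise constant, the placement of the edges inside $A_1\times B_1$ is irrelevant. The quantity $\mathbf z^{\mathsf T}(D(H)-D(M_{n,\delta+1}^{k-1}))\mathbf z$ is then a sum of visibly nonnegative terms, positive unless $E'$ has $k-1$ distinct endpoints on each side.
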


\begin{proof}
	Let $A$ and $B$ be the vertex sets of $K_{\delta+1}$ and $K_{n-\delta-1}$, respectively. Suppose that the added edges have $a$ distinct endpoints in $A$ and $b$ distinct endpoints in $B$. Then $1\le a,b\le k-1$. Choose $A_1\subseteq A$ and $B_1\subseteq B$ with $|A_1|=|B_1|=k-1$. These sets contain all endpoints of the added edges in $A$ and $B$, respectively. We obtain $M_{n,\delta+1}^{k-1}$ by replacing the added edges with a matching of size $k-1$ between $A_1$ and $B_1$.
	
	Consider the partition $\{A_1,A\setminus A_1,B_1,B\setminus B_1\}$, the distance matrix $D(M_{n,\delta+1}^{k-1})$ has the equitable quotient matrix
	\begin{equation}
		\label{eq:matching-quotient}
		Q_{n,\delta+1}^{k-1}=
		\begin{pmatrix}
			k-2 & \delta-k+2 & 2k-3 & 2(n-\delta-k)\\
			k-1 & \delta-k+1 & 2k-2 & 3(n-\delta-k)\\
			2k-3 & 2(\delta-k+2) & k-2 & n-\delta-k\\
			2k-2 & 3(\delta-k+2) & k-1 & n-\delta-k-1
		\end{pmatrix}.
	\end{equation}
	Let $\mathbf x=(x_1,x_2,x_3,x_4)^{\mathsf T}$ be a positive eigenvector of $Q_{n,\delta+1}^{k-1}$ corresponding to its largest eigenvalue $\lambda_1(Q_{n,\delta+1}^{k-1})$, normalized so that $(k-1)x_1^2+(\delta-k+2)x_2^2
	+(k-1)x_3^2+(n-\delta-k)x_4^2=1$. Define $\mathbf z=(z_v)_{v\in V(H)}$ by $z_v=x_1$ for $v\in A_1$, $z_v=x_2$ for $v\in A\setminus A_1$, $z_v=x_3$ for $v\in B_1$, and $z_v=x_4$ for $v\in B\setminus B_1$. Then $\mathbf z^{\mathsf T}\mathbf z
	=(k-1)x_1^2+(\delta-k+2)x_2^2
	+(k-1)x_3^2+(n-\delta-k)x_4^2=1$. By Lemma \ref{lem:quotient}, we have
	\[
	D(M_{n,\delta+1}^{k-1})\mathbf z
	=\lambda_1(Q_{n,\delta+1}^{k-1})\mathbf z
	=\rho_D(M_{n,\delta+1}^{k-1})\mathbf z.
	\]
	
	For nonadjacent vertices $u\in A$ and $v\in B$, we have $d_H(u,v)=2$ if at least one of them is incident with an added edge, and $d_H(u,v)=3$ otherwise. Both $H$ and $M_{n,\delta+1}^{k-1}$ have $k-1$ edges between $A_1$ and $B_1$. In $H$, exactly $(k-1-a)(k-1-b)$ pairs in $A_1\times B_1$ have distance three, while all remaining nonadjacent pairs in these two sets have distance two. In $M_{n,\delta+1}^{k-1}$, every nonadjacent pair in $A_1\times B_1$ has distance two.
	
	Distances within $A$ and $B$, and those between $A\setminus A_1$ and $B\setminus B_1$, are the same in $H$ and $M_{n,\delta+1}^{k-1}$. By the Rayleigh quotient principle, we obtain
	\begin{align*}
		\rho_D(H)-\rho_D(M_{n,\delta+1}^{k-1})
		&\ge\mathbf z^{\mathsf T}\bigl(D(H)-D(M_{n,\delta+1}^{k-1})\bigr)\mathbf z\\
		&=2x_1x_3\sum_{u\in A_1}\sum_{v\in B_1}
		\bigl(d_H(u,v)-d_{M_{n,\delta+1}^{k-1}}(u,v)\bigr)\\
		&\quad+2x_1x_4\sum_{u\in A_1}\sum_{v\in B\setminus B_1}
		\bigl(d_H(u,v)-d_{M_{n,\delta+1}^{k-1}}(u,v)\bigr)\\
		&\quad+2x_2x_3\sum_{u\in A\setminus A_1}\sum_{v\in B_1}
		\bigl(d_H(u,v)-d_{M_{n,\delta+1}^{k-1}}(u,v)\bigr)\\
		&=2x_1x_3\Bigl[2(k-1)^2-(k-1)+(k-1-a)(k-1-b)
		-\bigl(2(k-1)^2-(k-1)\bigr)\Bigr]\\
		&\quad+2x_1x_4\Bigl[\bigl(2a+3(k-1-a)\bigr)(n-\delta-k)
		-2(k-1)(n-\delta-k)\Bigr]\\
		&\quad+2x_2x_3\Bigl[\bigl(2b+3(k-1-b)\bigr)(\delta-k+2)
		-2(k-1)(\delta-k+2)\Bigr]\\
		&=2x_1x_3(k-1-a)(k-1-b)
		+2x_1x_4(k-1-a)(n-\delta-k)\\
		&\quad+2x_2x_3(\delta-k+2)(k-1-b)\\
		&\ge0.
	\end{align*}
	If the added edges do not form a matching, then $a<k-1$ or $b<k-1$. Since $\delta-k+2,n-\delta-k>0$ and $x_1,x_2,x_3,x_4>0$, we have $\rho_D(H)>\rho_D(M_{n,\delta+1}^{k-1})$. If the added edges form a matching, then $H\cong M_{n,\delta+1}^{k-1}$. This completes the proof.
\end{proof}

We now prove Theorem \ref{thm:strong}.

\begin{proof}[Proof of Theorem \ref{thm:strong}]
	Suppose that $\tau(G)\le k-1$. By Theorem \ref{thm:packing}, there exists a partition $\mathcal P=\{V_1,\ldots,V_p\}$ of $V(G)$ into $2\le p\le n$ nonempty parts such that
	\begin{equation}
		\label{eq:strong-partition}
		e_G(\mathcal P)\le k(p-1)-1.
	\end{equation}
	Choose $\mathcal P$ so that $p$ is as small as possible. Let $n_i=|V_i|$ for $1\le i\le p$, so that $\sum_{i=1}^{p}n_i=n$. We consider the following two cases.
	
	\noindent \textbf{Case 1.} $p\ge3$.
	
	Let $r_i=e_G(V_i,V(G)\setminus V_i)$ for $1\le i\le p$. Suppose that $r_i\le k-1=k(2-1)-1$ for some $i$. Then the partition $\{V_i,V(G)\setminus V_i\}$ also satisfies Inequality \eqref{eq:strong-partition} and has only two parts, contrary to the choice of $\mathcal P$. Hence $r_i\ge k$ for every $1\le i\le p$.
	
	Without loss of generality, assume that $r_1\le r_2\le\cdots\le r_p$. If $r_3\ge2k$, then
	\[
	2e_G(\mathcal P)=\sum_{i=1}^{p}r_i
	\ge2k(p-2)+2k=2k(p-1),
	\]
	contrary to Inequality \eqref{eq:strong-partition}. Hence,
	\[
	k\le r_1\le r_2\le r_3\le2k-1\le\delta-1.
	\]
	By Lemma \ref{lem:cut}, we have $n_1,n_2,n_3\ge\delta+1$.
	
	Let $V'=V(G)\setminus(V_1\cup V_2)$ and $n'=|V'|$. Then $\{V_1,V_2,V'\}$ is a partition of $V(G)$, $n_1,n_2, n'\ge \delta+1\ge 3(2k-1)$ and $n_1+n_2+n'=n\ge 3(\delta+1)$. 
	
	Let $V_a,V_b$ be any two distinct parts of $\{V_1,V_2,V'\}$. Without loss of generality, assume that $V_a=V_1$ or $V_a=V_2$. Then $e_G(V_a,V(G)\setminus V_a)\le2k-1$. Hence at least $|V_a|-(2k-1)$ vertices of $V_a$ have no neighbor outside $V_a$. Also, since $e_G(V_a,V_b)\le e_G(V_a,V(G)\setminus V_a)\le2k-1$, at least $|V_b|-(2k-1)$ vertices of $V_b$ have no neighbor in $V_a$. Let $u$ and $v$ be any two such vertices in $V_a$ and $V_b$, respectively. Then $uv\notin E(G)$ and $N_G(u)\cap N_G(v)=\emptyset$, so $d_G(u,v)\ge3$. Since all other nonadjacent pairs have distance at least two, we obtain
	\begin{align*}
		\sum_{u\in V_a}\sum_{v\in V_b}d_G(u,v)
		&\ge e_G(V_a,V_b)
		+2\bigl(|V_a||V_b|-e_G(V_a,V_b)\bigr)+(|V_a|-2k+1)(|V_b|-2k+1)\\
		&=2|V_a||V_b|-e_G(V_a,V_b)
		+(|V_a|-2k+1)(|V_b|-2k+1)\\
		&\ge2|V_a||V_b|-(2k-1)
		+(|V_a|-2k+1)(|V_b|-2k+1).
	\end{align*}
	Since $|V_a|,|V_b|\ge3(2k-1)$ and $k\ge2$, we have
	\begin{equation}
		\label{eq:strong-average}
		\begin{aligned}
			\frac{1}{|V_a||V_b|}
			\sum_{u\in V_a}\sum_{v\in V_b}d_G(u,v)
			&\ge2-\frac{2k-1}{|V_a||V_b|}
			+\left(1-\frac{2k-1}{|V_a|}\right)
			\left(1-\frac{2k-1}{|V_b|}\right)\\
			&\ge2-\frac{1}{9(2k-1)}
			+\left(1-\frac13\right)^2\\
			&\ge2-\frac1{27}+\frac49
			=\frac{65}{27}>\frac{12}{5}.
		\end{aligned}
	\end{equation}
	
	Consider the matrix
	\[
	Q=
	\begin{pmatrix}
		n_1-1 & \frac{12}{5}n_2 & \frac{12}{5}n'\\
		\frac{12}{5}n_1 & n_2-1 & \frac{12}{5}n'\\
		\frac{12}{5}n_1 & \frac{12}{5}n_2 & n'-1
	\end{pmatrix}.
	\]
	Let $\mathbf x=(x_1,x_2,x_3)^{\mathsf T}$ be a positive eigenvector of $Q$ corresponding to its largest eigenvalue $\lambda_1(Q)$, normalized so that $n_1x_1^2+n_2x_2^2+n'x_3^2=1$. Define $\mathbf z=(z_v)_{v\in V(G)}$ by $z_v=x_1$ for $v\in V_1$, $z_v=x_2$ for $v\in V_2$ and $z_v=x_3$ for $v\in V'$. Then $\mathbf z^{\mathsf T}\mathbf z=n_1x_1^2+n_2x_2^2+n'x_3^2=1$.
	
	Since distinct vertices in the same part have distance at least one, by Inequality \eqref{eq:strong-average} and the Rayleigh quotient principle, we obtain
	\begin{align*}
		\rho_D(G)
		&\ge\mathbf z^{\mathsf T}D(G)\mathbf z\\
		&=x_1^2\sum_{u,v\in V_1}d_G(u,v)
		+x_2^2\sum_{u,v\in V_2}d_G(u,v)
		+x_3^2\sum_{u,v\in V'}d_G(u,v)\\
		&\quad+2x_1x_2\sum_{u\in V_1}\sum_{v\in V_2}d_G(u,v)
		+2x_1x_3\sum_{u\in V_1}\sum_{v\in V'}d_G(u,v)
		+2x_2x_3\sum_{u\in V_2}\sum_{v\in V'}d_G(u,v)\\
		&\ge n_1(n_1-1)x_1^2+n_2(n_2-1)x_2^2+n'(n'-1)x_3^2+2\cdot\frac{12}{5}
		\bigl(n_1n_2x_1x_2+n_1n'x_1x_3+n_2n'x_2x_3\bigr)\\
		&=n_1x_1\left((n_1-1)x_1+\frac{12}{5}n_2x_2+\frac{12}{5}n'x_3\right)
		+n_2x_2\left(\frac{12}{5}n_1x_1+(n_2-1)x_2+\frac{12}{5}n'x_3\right)\\
		&\quad+n'x_3\left(\frac{12}{5}n_1x_1+\frac{12}{5}n_2x_2+(n'-1)x_3\right)\\
		&=n_1x_1(Q\mathbf x)_1+n_2x_2(Q\mathbf x)_2+n'x_3(Q\mathbf x)_3\\
		&=\lambda_1(Q)\bigl(n_1x_1^2+n_2x_2^2+n'x_3^2\bigr)\\
		&=\lambda_1(Q).
	\end{align*}
	
	We next compare $\lambda_1(Q)$ with $\rho_D(M_{n,\delta+1}^{k-1})$. 
	
	Let $V(K_{\delta+1})=\{v_1,\ldots,v_{\delta+1}\}$ and $V(K_{n-\delta-1})=\{v_{\delta+2},\ldots,v_n\}$, and write $D(M_{n,\delta+1}^{k-1})=(d_{ij})$. Define $E=(e_{ij})$ by $e_{ij}=d_{ij}$ if $1\le i,j\le\delta+1$ or $\delta+2\le i,j\le n$, and $e_{ij}=3$ otherwise. Then $E\ge D(M_{n,\delta+1}^{k-1})$ and $E\ne D(M_{n,\delta+1}^{k-1})$. With respect to the partition $\{V(K_{\delta+1}),V(K_{n-\delta-1})\}$, the matrix $E$ has the equitable quotient matrix
	\[
	Q_E=
	\begin{pmatrix}
		\delta & 3(n-\delta-1)\\
		3(\delta+1) & n-\delta-2
	\end{pmatrix}.
	\]
	By a direct calculation, the characteristic polynomial of $Q_E$ is
	\[
	\varphi_{Q_E}(x)
	=(x+1)^2-n(x+1)-8(\delta+1)(n-\delta-1).
	\]
	By a direct calculation, let
	\[
	\theta=\lambda_1(Q_E)+1
	=\frac{n+\sqrt{n^2+32(\delta+1)(n-\delta-1)}}2.
	\]
	By the Perron--Frobenius theorem and Lemma \ref{lem:quotient}, we have
	\begin{equation}
		\label{eq:strong-upper}
		\rho_D(M_{n,\delta+1}^{k-1})
		<\lambda_1(E)=\lambda_1(Q_E)=\theta-1.
	\end{equation}
	Since $\varphi_{Q_E}(\theta-1)=0$, we have $\theta^2-n\theta=8(\delta+1)(n-\delta-1)\le 2n^2$, hence $\theta\le2n$.
	
	Let $\varphi_Q(x)$ be the characteristic polynomial of $Q$. By a direct calculation, we have
	\[
	\varphi_Q(x-1)=x^3-nx^2-\frac{119}{25}(n_1n_2+n_1n'+n_2n')x-\frac{1421}{125}n_1n_2n'.
	\]
	
	By Lemma \ref{lem:sum} and $n_1,n_2,n'\ge\delta+1$, we obtain
	\begin{align*}
		n_1n_2+n_1n'+n_2n'
		&=\binom n2-\binom{n_1}{2}-\binom{n_2}{2}-\binom{n'}{2}\\
		&\ge\binom n2-2\binom{\delta+1}{2}-\binom{n-2\delta-2}{2}\\
		&=(\delta+1)(2n-3\delta-3).
	\end{align*}
	Let $n_1=\delta+1+a$, $n_2=\delta+1+b$, $n'=\delta+1+c$, where \(a,b,c\ge 0\). Since \(n=n_1+n_2+n'\), we have
	\begin{align*}
		&n_1n_2n'-(n-2(\delta+1))(\delta+1)^2\\
		&=(\delta+1+a)(\delta+1+b)(\delta+1+c)
		-(\delta+1+a+b+c)(\delta+1)^2\\
		&=(\delta+1)(ab+bc+ca)+abc\ge 0.
	\end{align*}
	
	Using the above inequalities and $\theta^2-n\theta=8(\delta+1)(n-\delta-1)$, we obtain
	\begin{align*}
		\varphi_Q(\theta-1)
		&\le\theta^3-n\theta^2
		-\frac{119}{25}(\delta+1)(2n-3\delta-3)\theta
		-\frac{1421}{125}(\delta+1)^2(n-2\delta-2)\\
		&=8(\delta+1)(n-\delta-1)\theta
		-\frac{119}{25}(\delta+1)(2n-3\delta-3)\theta
		-\frac{1421}{125}(\delta+1)^2(n-2\delta-2)\\
		&=\frac{(\delta+1)\bigl(157(\delta+1)-38n\bigr)\theta}{25}
		-\frac{1421(\delta+1)^2(n-2\delta-2)}{125}\\
		&\le-\frac{131(\delta+1)^3}{125}<0,
	\end{align*}
	which the last inequalities follows from $\theta\le2n$ and $n\ge3\delta+3$. Hence $\varphi_Q(x)$ has a real zero greater than $\theta-1=\lambda_1(Q_E)$. Thus $\lambda_1(Q)>\lambda_1(Q_E)$. Together with Inequality \eqref{eq:strong-upper}, we have
	\[
	\rho_D(G)\ge\lambda_1(Q)>\lambda_1(Q_E)>\rho_D(M_{n,\delta+1}^{k-1}),
	\]
	which is contrary to the assumption.
	
	\noindent \textbf{Case 2.} $p=2$.
	
	By Inequality \eqref{eq:strong-partition}, we have $e_G(V_1,V_2)\le k-1\le\delta-1$. By Lemma \ref{lem:cut}, $n_1,n_2\ge\delta+1$. Without loss of generality, assume that $n_1\le n_2$.
	
	Let $H$ be obtained from $G$ by adding all missing edges within $V_1$ and $V_2$, and then adding edges between $V_1$ and $V_2$ until there are exactly $k-1$ such edges. By Lemmas \ref{lem:edge} and \ref{lem:matching}, together with the assumption, we obtain
	\begin{equation}
		\label{eq:strong-two-parts}
		\rho_D(M_{n,\delta+1}^{k-1})
		\ge\rho_D(G)\ge\rho_D(H)
		\ge\rho_D(M_{n,n_1}^{k-1}).
	\end{equation}
	
	We claim that $n_1=\delta+1$. Suppose that $n_1>\delta+1$. Let $\varphi_{\delta+1}(x)$ and $\varphi_{n_1}(x)$ be the characteristic polynomials of $Q_{n,\delta+1}^{k-1}$ and $Q_{n,n_1}^{k-1}$ in \eqref{eq:matching-quotient}, respectively. By a direct calculation, the characteristic polynomials of $Q_{n,n_1}^{k-1}$ and $Q_{n,\delta+1}^{k-1}$ are
	\begin{align*}
		\varphi_{n_1}(x)
		&=x^4-(n-4)x^3
		+\bigl((5k-8)n-5k^2+14k-4-8n_1n_2\bigr)x^2\\
		&\quad+\bigl((2k-18)n_1n_2+(-2k^2+14k-14)n
		+2k^3-16k^2+32k-16\bigr)x\\
		&\quad+(3k^2-14k+11)n_1n_2
		+(-3k^3+17k^2-28k+14)n\\
		&\quad+3k^4-20k^3+45k^2-40k+12
	\end{align*}
	and
	\begin{align*}
		\varphi_{\delta+1}(x)
		&=x^4-(n-4)x^3+\bigl((5k-8)n-5k^2+14k-4
		-8(\delta+1)(n-\delta-1)\bigr)x^2\\
		&\quad+\bigl((2k-18)(\delta+1)(n-\delta-1)
		+(-2k^2+14k-14)n+2k^3-16k^2+32k-16\bigr)x\\
		&\quad+(3k^2-14k+11)(\delta+1)(n-\delta-1)
		+(-3k^3+17k^2-28k+14)n\\
		&\quad+3k^4-20k^3+45k^2-40k+12,
	\end{align*}
	respectively. Since $n_1+n_2=n$ and $n_2\ge n_1>\delta+1$, for $x\ge k-1$, we obtain
	\begin{align*}
		\varphi_{n_1}(x)-\varphi_{\delta+1}(x)
		&=\bigl(n_1n_2-(\delta+1)(n-\delta-1)\bigr)
		\bigl(-8x^2+(2k-18)x+3k^2-14k+11\bigr)\\
		&=\bigl(n_1n_2-(\delta+1)(n_1+n_2)+(\delta+1)^2\bigr)\bigl(-8x^2+(2k-18)x+3k^2-14k+11\bigr)\\
		&=(n_1-\delta-1)(n_2-\delta-1)\bigl(-8x^2+2(k-1)x+3(k-1)^2-16x-8(k-1)\bigr)\\
		&\le(n_1-\delta-1)(n_2-\delta-1)
		\bigl(-8x^2+2x^2+3x^2-16x-8(k-1)\bigr)\\
		&=(n_1-\delta-1)(n_2-\delta-1)
		\bigl(-3x^2-16x-8(k-1)\bigr)\\
		&<0.
	\end{align*}
	
	Let $\rho=\rho_D(M_{n,\delta+1}^{k-1})$, we have
	\[
	\rho\ge
	\frac{\mathbf1^{\mathsf T}D(M_{n,\delta+1}^{k-1})\mathbf1}
	{\mathbf1^{\mathsf T}\mathbf1}
	\ge\frac{n(n-1)}n=n-1>k-1.
	\]
	By Lemma \ref{lem:quotient}, $\varphi_{\delta+1}(\rho)=0$, and hence $\varphi_{n_1}(\rho)<\varphi_{\delta+1}(\rho)=0$. Hence $\varphi_{n_1}(x)$ has a real zero greater than $\rho$. Thus,
	\[
	\rho_D(M_{n,n_1}^{k-1})
	=\lambda_1(Q_{n,n_1}^{k-1})
	>\rho_D(M_{n,\delta+1}^{k-1}),
	\]
	which is contrary to Inequality \eqref{eq:strong-two-parts}. Thus $n_1=\delta+1$, and all inequalities in \eqref{eq:strong-two-parts} are equalities. By Lemma \ref{lem:edge}, $G\cong H$. By Lemma \ref{lem:matching}, the edges between $V_1$ and $V_2$ form a matching. Hence $G\cong M_{n,\delta+1}^{k-1}$. This completes the proof.
\end{proof}

\section{Concluding remarks}

In this paper, we obtain sharp distance spectral radius conditions for $\tau(G)\ge k$, covering every possible order $n\ge2k$. We also obtain sharp conditions under different minimum degree assumptions.

We recall the following spectral radius condition established by
Fan et al. \cite{Fan2023}.

\begin{theorem}[Fan et al. \cite{Fan2023}]
	\label{thm:fan2023}
	Let $k\ge2$ be an integer and let $G$ be a connected graph of order
	$n\ge2\delta+3$ with minimum degree $\delta\ge2k$.
	If $\rho(G)\ge\rho(B_{n,\delta+1}^{k-1})$,
	then $\tau(G)\ge k$, unless $G\cong B_{n,\delta+1}^{k-1}$,
	where $B_{n,\delta+1}^{k-1}$ is obtained from
	$K_{\delta+1}\cup K_{n-\delta-1}$ by adding $k-1$ edges
	joining one vertex of $K_{\delta+1}$ to $k-1$ distinct vertices
	of $K_{n-\delta-1}$.
\end{theorem}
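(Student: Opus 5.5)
We would argue by contradiction, mirroring the structure of the proof of Theorem \ref{thm:strong} but with the inequalities reversed, since the adjacency spectral radius increases when edges are added. Suppose $\rho(G)\ge\rho(B_{n,\delta+1}^{k-1})$ and $\tau(G)\le k-1$. By Theorem \ref{thm:packing} there is a partition $\mathcal P=\{V_1,\ldots,V_p\}$ with $e_G(\mathcal P)\le k(p-1)-1$, and we choose $p$ minimal.

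\textbf{Case $p\ge3$.} Exactly as in Case 1 of the proof of Theorem \ref{thm:strong}, minimality gives $r_i=e_G(V_i,V(G)\setminus V_i)\ge k$ for every $i$. Counting then gives $r_1\le r_2\le r_3\le 2k-1\le\delta-1$. Lemma \ref{lem:cut} yields $n_1,n_2,n_3\ge\delta+1$, so $n\ge3\delta+3$.

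We would then bound $e(G)$. By Lemma \ref{lem:sum} with lower bounds $a_i=\delta+1$ for the three small-boundary parts, we get
\[
e(G)\le 2\binom{\delta+1}{2}+\binom{n-2\delta-2}{2}+k(p-1)-1.
\]
Here the term in $p$ must be absorbed using $p\le n-3\delta$ and the parts of size at least one. Next we apply the Hong--Shu--Fang bound
\[
\rho(G)\le\frac{\delta-1+\sqrt{(\delta+1)^2+4(2e(G)-\delta n)}}{2}.
\]
The aim is to conclude $\rho(G)<n-\delta-2$. Since $B_{n,\delta+1}^{k-1}$ contains $K_{n-\delta-1}$, we have $\rho(B_{n,\delta+1}^{k-1})>n-\delta-2$, which gives the contradiction. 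This step is the main obstacle. The edge count loses roughly $(\delta+1)(n-2\delta-2)$ compared with a single large clique, and we must check that this loss beats the $\delta$-dependent terms in the bound. This is where $\delta\ge2k$ and $n\ge2\delta+3$ should be used. If the bound is too weak for small $p$, we would instead merge the remaining parts as in Theorem \ref{thm:strong}, giving three parts $V_1,V_2,V'$, and compare with $K_{\delta+1}\cup K_{\delta+1}\cup K_{n-2\delta-2}$ plus at most $2k-1$ edges.

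\textbf{Case $p=2$.} Here $e_G(V_1,V_2)\le k-1\le\delta-1$, so $n_1,n_2\ge\delta+1$; assume $n_1\le n_2$. Add all edges inside $V_1$ and $V_2$, and add cross edges until there are exactly $k-1$ of them, obtaining $H$. This only increases $\rho$, strictly unless $H=G$.

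Among graphs $K_{n_1}\cup K_{n_2}$ plus $k-1$ cross edges, we would show that $\rho$ is maximized exactly when all cross edges share one endpoint in the smaller clique $K_{n_1}$ and go to $k-1$ distinct vertices in $K_{n_2}$. We would prove this with the standard Kelmans/edge-rotation argument using the Perron vector. Rotating cross edges so they share the endpoint of largest Perron entry strictly increases $\rho$. Placing that common endpoint on the side with the larger clique entries, and checking via the quotient matrix, fixes the center in $K_{n_1}$. The result is $\rho(H)\le\rho(B_{n,n_1}^{k-1})$.

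Finally we would show $\rho(B_{n,n_1}^{k-1})<\rho(B_{n,\delta+1}^{k-1})$ when $\delta+1<n_1\le n/2$. To do this, take an equitable quotient matrix of $B_{n,n_1}^{k-1}$ with parts: the center, the rest of $K_{n_1}$, the $k-1$ neighbours in $K_{n_2}$, and the rest of $K_{n_2}$. As in Case 2 of the proof of Theorem \ref{thm:strong}, the difference of characteristic polynomials factors as $(n_1-\delta-1)(n_2-\delta-1)$ times a polynomial of fixed sign for $x\ge n-\delta-2$. This forces $n_1=\delta+1$ and equality throughout, so $G\cong B_{n,\delta+1}^{k-1}$.
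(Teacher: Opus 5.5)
The paper contains no proof of this statement. It is quoted from Fan et al.~\cite{Fan2023} in the concluding remarks, only to contrast $B_{n,\delta+1}^{k-1}$ with $M_{n,\delta+1}^{k-1}$. Your proposal therefore has to stand on its own.

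\textbf{Case $p\ge3$ is sound.} The step you flag as the main obstacle closes directly. Since every admissible $p$ satisfies $p\le n-3\delta$, we have $n-2\delta-p\ge\delta>k$. So the bound $\binom{n-2\delta-p+1}{2}+\delta(\delta+1)+k(p-1)-1$ decreases in $p$, giving $e(G)\le\delta(\delta+1)+\binom{N}{2}+2k-1$ with $N=n-2\delta-2\ge\delta+1$. Then $(\delta+1)^2+4(2e(G)-\delta n)\le(2N-\delta-1)^2+16k-8$. Since $2N-\delta-1\ge2k+1$ and $\tfrac{16k-8}{2(2k+1)}<4$, the Hong--Shu--Fang bound gives
\[
\rho(G)<N+1=n-2\delta-1<n-\delta-2<\rho(B_{n,\delta+1}^{k-1}).
\]
No merging is needed.

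A small correction to your last step: in $t=x+1$ the two quartics differ by $(n_1-\delta-1)\bigl[(n_2-\delta-1)(t^2-k+1)+(k-1)(k-2)\bigr]$. This is not a pure multiple of $(n_1-\delta-1)(n_2-\delta-1)$, but it is still positive, so the conclusion survives.

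\textbf{The genuine gap is in Case $p=2$.} It is the claim that, among all graphs $K_{n_1}\cup K_{n_2}$ plus $k-1$ cross edges, the star centred in $K_{n_1}$ uniquely maximises $\rho$.

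For $n_1\ge\delta+2$ you can avoid this claim. The row sums of $D^{-1}A(H)D$, with $D$ the degree matrix, give $\rho(H)\le\max_v\frac{1}{d_H(v)}\sum_{u\sim v}d_H(u)<n_2\le n-\delta-2$. For $n_1=\delta+1$, however, the claim is essentially the theorem itself, and your rotation sketch does not prove it.

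\begin{itemize}
\item The rotation lemma replaces $vw$ by $uw$ only when $uw\notin E(H)$. Once some $w\in V_2$ has two cross neighbours, one being the top vertex $u$ of $V_1$, you cannot rotate $vw$ onto $u$.
\item Separating the cross edges at $w$ would require moving an endpoint to some $w'\in V_2$ with no cross edge. Such a $w'$ has a strictly smaller entry, since $(\rho+1)(x_w-x_{w'})=\sum_{a\in N(w)\cap V_1}x_a>0$. The lemma then points the wrong way.
\item Rotations (and Kelmans operations) therefore stall at nested configurations, for example $K_{2,2}$ when $k-1=4$.
\item For $K_{2,2}$, the old Perron vector gives every $K_{n_1}$-centred star a smaller Rayleigh quotient than $\rho(H)$. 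Indeed $x_a=x_b$ by symmetry, while any star needs two fresh leaves with entries below those of $w_1,w_2$.
\item The gain from passing to the star is second order; perturbatively it is governed by $\sum_{a\in V_1}|N(a)\cap V_2|^2$. No first-order switching argument can capture it.
\end{itemize}

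You need a quantitative analysis of the eigen-equations, or a suitable lemma from the literature. A natural starting point is $(\rho+1-n_1)X_1=\sum_{aw}x_w$ and $(\rho+1-n_2)X_2=\sum_{aw}x_a$, where $X_i=\sum_{v\in V_i}x_v$ and the sums run over the cross edges $aw$.

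The quotient-matrix check you mention settles only star versus star. It does favour $K_{n_1}$: the quartic of the $K_{n_1}$-centred star is smaller by $(k-1)(k-2)(n_2-n_1)$. Note that this contradicts your heuristic: $K_{n_1}$ is the side with the \emph{smaller} Perron entries.
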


It's interesting to compare the extremal graphs $M_{n,\delta+1}^{k-1}$ in
Theorem \ref{thm:strong} and $B_{n,\delta+1}^{k-1}$ in
Theorem \ref{thm:fan2023}.
Both are obtained from $K_{\delta+1}\cup K_{n-\delta-1}$
by adding $k-1$ edges between the two cliques.
In $M_{n,\delta+1}^{k-1}$ and $B_{n,\delta+1}^{k-1}$, the $k-1$ edges form a matching and a star, representing the most dispersed and the most concentrated arrangements, respectively.

In addition, the minimum degree bound $\delta\ge6k-4$ in
Theorem \ref{thm:strong} may not be best possible.
Determining a better minimum degree bound is left for future work.

\section*{Declaration of competing interest}
\quad\quad The authors declare that they have no known competing financial interests or personal relationships that could have appeared to influence the work reported in this paper.

\section*{Data availability}
\quad\quad No data was used for the research described in the article.

\end{document}